\documentclass{amsart}
\usepackage{tikz}
\usepackage{subcaption}
\usetikzlibrary{decorations.pathreplacing}
\usetikzlibrary{decorations.pathmorphing}
\usepackage{stmaryrd}
\usepackage{url}
\usepackage{xcolor}
\usepackage{enumerate}
\usepackage{amssymb}
\usepackage{amsthm}
\usepackage{amsmath}
\usepackage{color}



\renewcommand{\supset}{\supseteq}

\newcommand{\fmax}{f_{\mathord{\max}}}

\tikzstyle{stretchy}=[decorate, decoration={snake, segment length=.2cm, amplitude=.05cm}]
\tikzstyle{arbpath}=[densely dotted]

\newcommand{\st}{\colon\,}
\tikzstyle{vertex}=[inner sep = 0pt, minimum width=4pt, fill=black, shape=circle]
\tikzstyle{squarevert}=[inner sep = 0pt, minimum width=4pt, minimum height=4pt, fill=white, shape=rectangle, draw=black, thick]
\tikzstyle{trivert}=[inner sep = 0pt, minimum width=6pt, minimum height=6pt, fill=gray!50!white, shape=regular polygon,regular polygon sides=3, draw=black, thick]
\tikzstyle{pentvert}=[inner sep = 0pt, minimum width=6pt, minimum height=6pt, fill=gray!15!white, shape=regular polygon,regular polygon sides=5, draw=black, thick]

\newcommand{\after}{\mathop{\circ}}
\newcommand{\iso}{\cong}

\newcommand{\anonlist}[2]{\node[style=vertex, label=#1:\vphantom{pqrsxyz}#2]}
\newcommand{\gpoint}[2]{\node[style=vertex, label=#1:$#2$]}

\newcommand{\apoint}[1]{\gpoint{above}{#1}}
\newcommand{\lpoint}[1]{\gpoint{left}{#1}}
\newcommand{\rpoint}[1]{\gpoint{right}{#1}}

\renewcommand{\subset}{\subseteq}

\DeclareMathOperator{\dam}{dam}

\DeclareMathOperator{\ch}{ch}

\newtheorem{proposition}{Proposition}[section]
\newtheorem{lemma}[proposition]{Lemma}
\newtheorem{theorem}[proposition]{Theorem}
\newtheorem{corollary}[proposition]{Corollary}
\newtheorem{conjecture}[proposition]{Conjecture}
\theoremstyle{definition}
\newtheorem{definition}[proposition]{Definition}
\newtheorem{observation}[proposition]{Observation}
\newtheorem{example}[proposition]{Example}

\newcommand{\bigsizeof}[1]{\left\lvert{#1}\right\rvert}
\newcommand{\sizeof}[1]{\lvert{#1}\rvert}

\title{On $\ab{4}{2}$-choosable Graphs}
\date{\today}

\newcommand{\etaketwo}[3]{#1 \ominus (#2, #3)}
\newcommand{\Ltaketwo}[2]{\etaketwo{L}{#1}{#2}}



\newcommand{\ab}[2]{(#1, #2)}



\author{Jixian Meng}
\author{Gregory J.~Puleo}
\author{Xuding Zhu}
\address[Jixian Meng, Xuding Zhu]{Department of Mathematics, Zhejiang Normal University}
\address[Gregory J.~Puleo]{Department of Mathematics, University of Illinois at Urbana--Champaign}
\thanks{This research supported partially by grant CNSF 11571319.}

\begin{document}
\maketitle
\begin{abstract}
  A graph $G$ is called $\ab{a}{b}$-choosable if for any list
  assignment $L$ which assigns to each vertex $v$ a set $L(v)$ of $a$
  permissible colours, there is a $b$-tuple $L$-colouring of $G$. An
  $\ab{a}{1}$-choosable graph is also called $a$-choosable. In the
  pioneering paper on list colouring of graphs by Erd\H{o}s, Rubin and
  Taylor~\cite{rubin}, $2$-choosable graphs are
  characterized. Confirming a special case of a conjecture
  in~\cite{rubin}, Tuza and Voigt~\cite{tuzavoigt-twom} proved that
  $2$-choosable graphs are $\ab{2m}{m}$-choosable for any positive
  integer $m$.  On the other hand, Voigt \cite{voigt} proved that if
  $m$ is an odd integer, then these are the only
  $\ab{2m}{m}$-choosable graphs; however, when $m$ is even, there are
  $\ab{2m}{m}$-choosable graphs that are not $2$-choosable. A graph is
  called $3$-choosable-critical if it is not $2$-choosable, but all
  its proper subgraphs are $2$-choosable. Voigt conjectured that for
  every positive integer $m$, all bipartite $3$-choosable-critical
  graphs are $\ab{4m}{2m}$-choosable. In this paper, we determine
  which $3$-choosable-critical graphs are $\ab{4}{2}$-choosable,
  refuting Voigt's conjecture in the process.  Nevertheless, a weaker
  version of the conjecture is true: we prove that there is an even
  integer $k$ such that for any positive integer $m$, every bipartite
  $3$-choosable-critical graph is $\ab{2km}{km}$-choosable. Moving
  beyond $3$-choosable-critical graphs, we present an infinite family
  of non-$3$-choosable-critical graphs which have been shown by
  computer analysis to be $\ab{4}{2}$-choosable. This shows that the
  family of all $\ab{4}{2}$-choosable graphs has rich structure.
\end{abstract}

\section{Introduction}

Multiple list colouring of graphs was introduced in the 1970s by
Erd\H{o}s, Rubin and Taylor~\cite{rubin}.  A \emph{list assignment} is
a function $L$ which assigns to each vertex $v$ a set of permissible
colours $L(v)$.  A \emph{$b$-tuple colouring} of a graph $G$ is a
function $f$ that assigns to each vertex $v$ a set $f(v)$ of $b$
colours so that $f(u) \cap f(v)= \emptyset$ for any edge $uv$ of $G$.
Given a list assignment $L$ of $G$, a \emph{$b$-tuple $L$-colouring}
of $G$, also called an $\ab{L}{b}$-colouring of $G$, is a $b$-tuple
colouring $f$ of $G$ with $f(v) \subseteq L(v)$ for all $v \in V(G)$.
We say $G$ is \emph{$\ab{L}{b}$-colourable} if there is a $b$-tuple
$L$-colouring of $G$, and say $G$ is \emph{$\ab{a}{b}$-choosable} if
$G$ is $\ab{L}{b}$-colourable for any list assignment $L$ with
$\sizeof{L(v)} = a$ for all $v$.  A $\ab{a}{1}$-choosable graph is
also called \emph{$a$-choosable}.  The \emph{choice number} $\ch(G)$
of a graph $G$ is the smallest integer $a$ such that $G$ is
$a$-choosable. List colouring of graphs has been studied extensively
in the literature; see \cite{tuzasurvey} for a survey.

The family of $2$-choosable graphs was characterized by Erd\H{o}s,
Rubin and Taylor~\cite{rubin}.  These graphs have very simple
structure. We define the \emph{core} of a graph $G$ to be the graph
obtained by iteratively deleting vertices of degree $1$. It is easy to
see that a graph is $2$-choosable if and only if its core is
$2$-choosable.  It was proved in \cite{rubin} that a graph $G$ is
$2$-choosable if and only if its core is $K_1$ or an even cycle or
$\Theta_{2,2,2p}$ for some positive integer $p$, where $\Theta_{r,s,t}$ is
the graph consisting of two end vertices $u$ and $v$ joined by three
internally vertex-disjoint paths containing $r$, $s$, and $t$ edges
respectively. Erd\H os, Rubin, and Taylor~\cite{rubin} conjectured
that if a graph is $\ab{a}{b}$-choosable, then it is
$\ab{am}{bm}$-choosable for every positive integer $m$;
Tuza and Voigt~\cite{tuzavoigt-twom} confirmed a special case of this conjecture
by proving that all $2$-choosable graphs are $\ab{2m}{m}$-choosable
for all $m$, but the conjecture is otherwise open. Moreover,
Voigt~\cite{voigt} proved that if $m$ is an odd integer, then these
are the only $\ab{2m}{m}$-choosable graphs.

When $m$ is even, the family of $\ab{2m}{m}$-choosable graphs has much
richer structure.  A \emph{$b$-tuple $a$-colouring} of a graph $G$ is
a $b$-tuple colouring $f$ of $G$ with $f(v) \subseteq \{1,2,\ldots,
a\}$ for each $v$. We say $G$ is \emph{$\ab{a}{b}$-colourable} if such
a colouring exists.  Alon, Tuza, and Voigt~\cite{atv} showed that if a
graph $G$ is $\ab{a}{b}$-colourable, then there is a positive integer
$k_G$ such that $G$ is $\ab{ak_Gm}{bk_Gm}$-choosable for all $m$. In
particular, for any bipartite graph $G$, there is a positive integer
$m_G$ such that $G$ is $\ab{2m_G}{m_G}$-choosable.

This paper is devoted to the study of $\ab{4m}{2m}$-choosability. In
particular, we are interested in the question of which graphs are
$\ab{4}{2}$-choosable.

A graph $G$ is called \emph{$3$-choosable-critical} if $G$ is not
$2$-choosable, but any proper subgraph is $2$-choosable.  The family
of $3$-choosable-critical graphs is characterized by
Voigt~\cite{voigt}:
\begin{theorem}[Voigt~\cite{voigt}]
  A graph is $3$-choosable-critical if and only if
  it is one of the following:
  \begin{enumerate}[(a)]
  \item two vertex-disjoint even cycles joined by a path,
  \item two even cycles with exactly one vertex in common,
  \item a $\Theta_{2r, 2s, 2t}$-graph or $\Theta_{2r-1, 2s-1,
      2t-1}$-graph with $r \geq 1$ and $s,t > 1$,
  \item a $\Theta_{2,2,2,2t}$-graph with $t \geq 1$,
  \item an odd cycle.
  \end{enumerate}
\end{theorem}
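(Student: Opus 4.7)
The plan is to use the Erd\H{o}s--Rubin--Taylor characterization of 2-choosable graphs as the key lemma: a graph is 2-choosable if and only if the core of each of its components is $K_1$, an even cycle, or $\Theta_{2,2,2p}$.

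The ``if'' direction is a finite case analysis: for each family (a)--(e), I would verify that the graph $G$ equals its own core and is not of ERT type (hence not 2-choosable), using a direct bad-list construction for odd cycles; I would then compute the core of $G-v$ for each vertex $v$ and of $G-e$ for each edge $e$ and confirm that each is of ERT type. The verification is routine once one notes, for instance, that removing an internal path vertex of a theta graph yields a core that is the cycle formed by the other two paths, which is forced to be even by the common-parity hypothesis.

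For the ``only if'' direction, let $G$ be 3-choosable-critical. I would first make three preliminary reductions: (i) $G$ is connected, for otherwise one could delete an edge or vertex from a 2-choosable component while preserving the non-2-choosable component; (ii) $G$ equals its own core, so $\delta(G) \geq 2$, because any degree-$1$ vertex $v$ yields a $G-v$ with the same core as $G$, forcing 2-choosability of $G$; and (iii) for every vertex $v$ and edge $e$, each component of $G-v$ and $G-e$ has core in the ERT list. I would then split on connectivity: if $G$ has a cut vertex, the structure of $G-v$ for interior vertices of each block forces $G$ to be case (a) or (b) (endblocks that are theta graphs get ruled out by edge deletion, and three or more cycles meeting at a point by vertex deletion); if $G$ is 2-connected, then $G$ is either a cycle (odd, case (e)) or a generalized theta graph $\Theta_{n_1,\ldots,n_k}$. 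Deleting an internal path vertex yields a core that is a generalized theta graph on $k-1$ paths, which by (iii) must be a cycle or $\Theta_{2,2,2p}$; this forces $k \in \{3, 4\}$, and parity/length constraints from iterated deletions pin the possibilities down to cases (c) and (d).

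The main obstacle will be the case analysis in the 2-connected case: tracking how the parity constraints from multiple deletions combine to restrict path-length patterns, and ruling out intermediate configurations such as mixed-parity thetas or $\Theta_{2,2,2,n}$ with $n$ odd, requires some care. But once the Erd\H{o}s--Rubin--Taylor theorem is available, the problem reduces cleanly to classifying graphs whose vertex- and edge-deleted subgraphs have cores of ERT type, and the remaining work is bookkeeping.
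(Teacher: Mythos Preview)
The paper does not prove this theorem: it is quoted from Voigt~\cite{voigt} as background, with no argument given. So there is no ``paper's own proof'' to compare against; your proposal stands on its own.

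On its merits, the outline is sound in spirit but has one real gap. In the 2-connected case you assert that $G$ is either a cycle or a generalized theta graph $\Theta_{n_1,\ldots,n_k}$, and then proceed to bound $k$ and pin down parities. But 2-connectedness alone does not force a graph to be a generalized theta graph (the cube $Q_3$, the prism, etc.\ are 2-connected and are not of this form). You must first use criticality to show that $G$ has at most two vertices of degree~$\geq 3$. One clean way: take an open ear decomposition of $G$ starting from a cycle; after adding the first ear you have a theta graph, and if any subsequent ear has an endpoint that is not already a branch vertex, the resulting proper subgraph is its own core, is not on the ERT list, and hence is not 2-choosable, contradicting criticality. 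This forces every ear after the first to have both endpoints at the two branch vertices, i.e.\ $G=\Theta_{n_1,\ldots,n_k}$. Once you have that, your deletion argument (removing an interior vertex of one path drops $k$ by one and must yield an ERT core) correctly gives $k\le 4$, and the parity bookkeeping you anticipate handles the rest. The cut-vertex branch also needs a similar preliminary step: you should argue that each block is a cycle (not a theta) before concluding (a)/(b), and your edge-deletion test is the right tool for that, but make it explicit.
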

Voigt conjectured that for any positive integer $m$, all bipartite
$3$-choosable-critical graphs are $\ab{4m}{2m}$-choosable.  In this
paper, we prove the following characterization of the
$\ab{4}{2}$-choosable $3$-choosable-critical graphs, which refutes
Voigt's conjecture:
\begin{theorem}\label{thm:main}
  A $3$-choosable-critical graph is $\ab{4}{2}$-choosable
  if and only if it is one of the following:
  \begin{enumerate}[(a)]
  \item two vertex-disjoint even cycles joined by a path,
  \item two even cycles with exactly one vertex in common,
  \item a $\Theta_{2,2s,2t}$-graph or $\Theta_{1, 2s-1, 2t-1}$-graph with $s,t > 1$,
  \item $\Theta_{2,2,2,2}$.
  \end{enumerate}
\end{theorem}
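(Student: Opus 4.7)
The proof splits cleanly into a negative direction (the $3$-choosable-critical graphs not in the list are not $\ab{4}{2}$-choosable) and a positive direction (each listed graph is $\ab{4}{2}$-choosable). Since Voigt's classification already restricts us to the five families (a)--(e), my task is, in each family, to determine exactly which parameter values yield $\ab{4}{2}$-choosability. Families (a) and (b) appear unconditionally in the conclusion; family (c) is pared down to requiring that one of the three path lengths be minimal (so that $r = 1$); family (d) survives only as $\Theta_{2,2,2,2}$; and family (e), the odd cycles, is eliminated entirely.

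For the negative direction, an odd cycle has fractional chromatic number $2 + \tfrac{1}{k} > 2 = 4/2$, hence is not even $\ab{4}{2}$-colourable; this handles family (e). For $\Theta_{2r,2s,2t}$ and $\Theta_{2r-1,2s-1,2t-1}$ with $r \geq 2$, and for $\Theta_{2,2,2,2t}$ with $t \geq 2$, I plan to exhibit explicit bad $4$-list assignments. The idea is to set $L(u) = L(v) = \{1,2,3,4\}$ at the branch vertices and then choose lists on each of the paths so that the sets of admissible endpoint pairs $(f(u), f(v)) \in \binom{L(u)}{2} \times \binom{L(v)}{2}$ contributed by the individual paths are mutually incompatible. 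The slack provided by $r \geq 2$ (or the extra-long fourth path when $t \geq 2$) is precisely what is needed to block all six pair choices at one endpoint.

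For the positive direction, I proceed family by family with a uniform three-step plan. First, identify the cut structure of the graph: a separating path in (a), a separating vertex in (b), two branch vertices joined by three paths in (c), and the $K_{2,4}$-type graph in (d). Second, establish a realisation lemma that describes, for a path equipped with a $4$-list assignment, which endpoint pairs $(f(u),f(v))$ extend to a full $\ab{L}{2}$-colouring of that path; for even cycles the analogous statement should say that essentially every prescribed pair at a designated vertex extends, which dispatches cases (a) and (b) rather quickly. Third, combine the realisable sets contributed by each path in a common-intersection argument at the branch vertices.

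The main obstacle will be case (c), where the realisation lemma has to be sharp enough to cope with arbitrary $4$-list assignments on paths of arbitrary length and parity, and where the three per-path constraint sets at the branch vertices must provably intersect for \emph{every} $4$-list assignment. I expect the argument to split into subcases keyed by $|L(u) \cap L(v)|$ and by the parities of the individual paths, and the hypothesis that one path be of length $1$ or $2$ is exactly what ensures that the three combined constraint sets cannot all be avoided simultaneously. Family (d) reduces to the single graph $\Theta_{2,2,2,2} \cong K_{2,4}$, which should yield to a finite case analysis exploiting its high symmetry, or, failing that, a computer check of the $\binom{4}{2} = 6$ pair choices at each of the six vertices.
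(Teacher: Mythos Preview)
Your high-level architecture matches the paper's: split into positive and negative directions, and for the positive direction reduce each graph to the question of which precolourings of the branch/cut vertices extend along each constituent path. But the substance of the argument is your unspecified ``realisation lemma,'' and this is where the paper's real work lies. The paper introduces the invariant $S_L(P)=\sum_i|X_i|$ (with $X_1=L(v_1)$ and $X_i=L(v_i)\setminus X_{i-1}$) and proves that a path admits an $(L:2)$-colouring iff $S_L(P)\ge 2|V(P)|$; it then computes exactly how $S_L$ drops when colour sets $p,q$ are deleted from the endpoints, in terms of three distinguished sets $A,\hat X_1,\hat X_n$ depending only on $L$. This ``damage'' calculus converts the extension problem into arithmetic, uniformly in the path length. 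With it, case (c) is handled not by your proposed split on $|L(u)\cap L(v)|$ and path parities, but by fixing an arbitrary bijection between $L(u)$ and $L(v)$, considering the six ``simple pairs'' it induces, and showing each internal path can block at most two of them; when all six are blocked the resulting rigidity forces a short residual analysis exploiting the length-$2$ path. Without an invariant of this kind your plan has no evident mechanism for handling paths of arbitrary length with a bounded case analysis, so case (c) is a genuine gap in the proposal rather than just a deferred detail.

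A few smaller deviations worth flagging. For the negative direction the paper gives bad lists only for $\Theta_{3,3,3}$ and $\Theta_{2,2,2,4}$ (and in the latter the two branch vertices do \emph{not} receive the same list, contrary to your plan), then uses Voigt's contraction lemma in contrapositive to propagate non-choosability to all larger parameters---simpler than building bad lists for every $(r,s,t)$. For cases (a) and (b) the correct statement is not that ``essentially every'' pair at the cycle's attachment vertex extends, but that at most two of the six pairs fail (a consequence of the same damage machinery); the paper then constructs an injection ${L(u)\choose 2}\to{L(v)\choose 2}$ along the connecting path so that among the six choices some pair is good on both sides. For $K_{2,4}$ the paper simply cites Tuza--Voigt rather than repeating a case analysis.
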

(Note that $\Theta_{2,2,2,2} \iso K_{2,4}$.)  In particular, among the
bipartite $3$-choosable-critical graphs, when $r,s,t$ have the same
parity and $\min\{r,s,t\} \geq 3$, the graph $\Theta_{r,s,t}$ fails to
be $\ab{4}{2}$-choosable, and when $t > 1$, the graph
$\Theta_{2,2,2,2t}$ fails to be $\ab{4}{2}$-choosable.

Nevertheless, a weaker version of Voigt's conjecture is true:
\begin{theorem}\label{thm:main2}
  There is a fixed integer $k$ such that for every positive integer $m$, every
  bipartite $3$-choosable-critical graph is $\ab{4km}{2km}$-choosable.
\end{theorem}

The paper is structured as follows. In Section~\ref{sec:damage} we
introduce the main lemmas and definitions needed for the proof of
Theorem~\ref{thm:main}. In Section~\ref{sec:lemmas} we collect some
more useful lemmas of a more technical nature.

In Section~\ref{sec:goodtheta} we prove that theta graphs of the form
$\Theta_{2,2s,2t}$ and $\Theta_{1, 2s-1, 2t-1}$ are
$\ab{4}{2}$-choosable. In Section~\ref{sec:evencycle} we apply these
results to show that if $G$ consists of two vertex-disjoint even
cycles joined by a path or two even cycles sharing a vertex, then $G$
is $\ab{4}{2}$-choosable. Tuza and Voigt have already
shown~\cite{tuzavoigt-k24} that $K_{2,4}$ is $\ab{4m}{2m}$-choosable
for all $m$, so this completes the positive direction of
Theorem~\ref{thm:main}.

In Section~\ref{sec:ctx} we present list assignments showing that
$\Theta_{3,3,3}$, $\Theta_{4,4,4}$, and $\Theta_{2,2,2,4}$ are not
$\ab{4}{2}$-choosable; a quick argument given in that section shows
that the larger theta graphs also fail to be
$\ab{4}{2}$-choosable. This completes the characterization of the
$\ab{4}{2}$-choosable $3$-choosable-critical graphs.

In Section~\ref{sec:voigt}, we prove Theorem \ref{thm:main2}.  In
Section~\ref{sec:all42}, we present some non-$3$-choosable-critical
graphs and briefly discuss the computer analysis that demonstrates that these
graphs are $\ab{4}{2}$-choosable. We close with a conjectured
characterization of the $\ab{4}{2}$-choosable graphs.

\section{Paths and Damage}\label{sec:damage}
\begin{figure}
  \centering
  \begin{tabular}{r|c|c|c}
    &\tikz[remember picture]{\apoint{v_1} (v1) at (0cm, 0cm) {};}&
    \tikz[remember picture]{\apoint{v_2} (v2) at (0cm, 0cm) {};}&
    \tikz[remember picture]{\apoint{v_3} (v3) at (0cm, 0cm) {};}\\\hline
    $L(v_i)$& abcd & abef & adeg \\\hline
    $X_i$ & abcd & ef & adg \\\hline
    $\hat{X}_i$ & cd && dg
  \end{tabular}
  \tikz[remember picture, overlay]{\draw (v1) -- (v2) -- (v3);}
  \caption{Example computations of $X_i$ and $\hat{X}_i$. Here $A = \{\mathrm{a}\}$.}
  \label{fig:xcomp-example}
\end{figure}
\begin{definition}
  When $P$ is an $n$-vertex path with vertices $v_1, \ldots, v_n$
  in order, and $L$ is a list assignment on $P$, we define
  sets $X_1, \ldots, X_n$ by
  \begin{align*}
    X_1 &= L(v_1), \\
    X_i &= L(v_i) - X_{i-1} \qquad (i \in \{2, \ldots, n\}).
  \end{align*}
  We also define the quantity $S_L(P)$ by
  \[ S_L(P) = \sum_{i=1}^n\sizeof{X_i}. \]
\end{definition}
\begin{lemma}\label{lem:xuding}
  Let $P$ be an $n$-vertex path and let $L$ be a list assignment on
  $P$ such that $\sizeof{L(v_1)}, \sizeof{L(v_n)} \geq 2m$ and
  $\sizeof{L(v_i)} = 4m$ for $i \in \{2, \ldots, n-1\}$. The path $P$
  is $\ab{L}{2m}$-colourable if and only if $S_L(P) \geq 2mn$.
\end{lemma}
\begin{proof}
  We use induction on $n$. The claim is trivial for $n=1$.  Assume that $n
  \ge 2$ and the claim holds for $n' < n$. Let $P' = P - v_n$, and
  observe that if $X'_1, \ldots X'_{n-1}$ are computed as above for
  $P'$, then $X'_i = X_i$ for all $i \in \{1, \ldots, n-1\}$.  Since
  $\sizeof{X_1} \geq 2m$ and $\sizeof{X_i} + \sizeof{X_{i-1}} \geq
  \sizeof{L(v_i)} \geq 4m$ for $i \in \{2, \ldots, n-1\}$, we have
  $\sum_{i=1}^{n-1}\sizeof{X'_i} \geq 2(n-1)m$.

  First assume $S_L(P) \geq 2nm$. We shall prove that $P$ is
  $\ab{L}{2m}$-colourable.  We determine a $2m$-set of colours
  $\phi(v_n)$ to be assigned to $v_n$ as follows: when $\sizeof{X_n}
  \geq 2m$, let $\phi(v_n)$ be any $2m$-subset contained in $X_n$;
  when $\sizeof{X_n} < 2m$, let $\phi(v_n)$ be any $2m$-subset of
  $L(v_n)$ containing $X_n$.

  Let $L^*$ be the restriction of $L$ to $P'$, except that
  $L^*(v_{n-1})=L(v_{n-1})- \phi(v_n)$. When $\sizeof{X_n} \geq 2m$,
  we have $S_{L^*}(P') = \sum_{i=1}^{n-1}\sizeof{X_i} \ge 2(n-1)m$,
  since $\phi(v_n) \cap X_{n-1} = \emptyset$; when $\sizeof{X_n} <
  2m$, we have $S_{L^*}(P') \geq
  \sum_{i=1}^{n}\sizeof{X_i}-\sizeof{\phi(v_n)} \geq 2(n-1)m$, since
  $\phi(v_n) \supset X_n$.  Either way, by the induction hypothesis,
  $P'$ has an $\ab{L^*}{2m}$-colouring, which extends to an
  $\ab{L}{2m}$-colouring of $P$ by assigning $\phi(v_n)$ to $v_n$.

  For the other direction, let $\phi$ be an $\ab{L}{2m}$-colouring of
  $P$.  Let $L^*$ be the restriction of $L$ to $P'$, except that
  $L^*(v_{n-1})=L(v_{n-1})- \phi(v_n)$, and let $X^*_1, \ldots X^*_{n-1}$
  be computed for $L^*$. Since $\phi$ is an $\ab{L^*}{2m}$-colouring
  of $P'$, the induction hypothesis implies
  that \[\sum_{i=1}^{n-1}\sizeof{X^*_i} = S_{L^*}(P') \ge 2(n-1)m.\] It is
  easy to verify that $X_i=X^*_i$ for $i=1,2,\ldots, n-2$, and
  $\sizeof{X_{n-1}} + \sizeof{X_n} \ge
  \sizeof{X^*_{n-1}}+\sizeof{\phi(v_n)} \ge \sizeof{X^*_{n-1}} + 2m$.
  Hence $S_L(P) \ge 2nm$.
\end{proof}
Our typical strategy for showing that a graph $G$ is
$\ab{4m}{2m}$-choosable is as follows: identify a set of vertices $X$
such that $G-X$ is a linear forest (disjoint union of paths), and find
a precolouring of $X$ such that each path $P$ in $G-X$ satisfies
$S_{L^*}(P) \geq 2m\sizeof{V(P)}$, where $L^*$ is obtained from $L$ by
removing from each vertex of $G-X$ the colours used on its neighbors
in $X$. Provided that the degree-$2$ vertices of $G-X$ have no
neighbors in $X$, Lemma~\ref{lem:xuding} then guarantees that we can
extend the precolouring of $X$ to the rest of the graph, as desired.

In order to carry out this strategy, we need to know how $S_L(P)$
changes when colours are removed from the endpoints of $P$. We will be
particularly interested in the case where $P$ has an odd number of
vertices. Before stating the results, we set up some more notation.
\begin{definition}
  If $L$ is a list assignment on an $n$-vertex path $P$ and $S,T$ are
  sets of colours, we define $\Ltaketwo{S}{T}$ to be the list
  assignment obtained from $L$ by deleting all colours in $S$ from
  $L(v_1)$, all colours in $T$ from $L(v_n)$, and leaving all other
  lists unchanged.
\end{definition}
\begin{definition}
  Let $L$ be a list assignment on an $n$-vertex path $P$, where $n$ is odd. Define \[A=\bigcap_{x \in
    V(P)}L(x).\]
  Let
  \begin{align*}
    \hat{X}_1 &= \{c \in L(v_1) - A \st \text{the smallest index $i$ for which $c \notin L(v_i)$ is even}\}. \\
    \hat{X}_n &= \{c \in L(v_n) - A \st \text{the largest index $i$ for which $c \notin L(v_i)$ is even}\}.
  \end{align*}
\end{definition}
See Figure~\ref{fig:xcomp-example} for an example of $\hat{X}_1$ and $\hat{X}_n$.
\begin{observation}
  If $P$ is an $n$-vertex path, where $n$ is odd, then for
  any list assignment on $P$, we have $\hat{X}_n = X_n - A$.
\end{observation}
\begin{lemma}\label{lem:oddhat}
  Let $L$ be a list assignment on an $n$-vertex path $P$, where $n$ is odd.
  For any sets of colours $S,T$, we have
  \[ S_{\Ltaketwo{S}{T}}(P) = S_L(P) - \left(\bigsizeof{(A \cup \hat{X}_1) \cap S} + \bigsizeof{(A \cup \hat{X}_n) \cap T} - \bigsizeof{A \cap S \cap T}\right). \]
\end{lemma}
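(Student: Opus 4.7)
The plan is to prove this identity by a color-by-color analysis, tracking how each color's contribution to $S_L(P)$ changes under the removal. To that end, rewrite $S_L(P) = \sum_c k_L(c)$, where $k_L(c) = \sizeof{\{i \st c \in X_i\}}$. Unfolding the recursion $X_i = L(v_i) - X_{i-1}$, a direct induction shows that $c \in X_i$ iff $i$ lies at an odd position within some maximal run of consecutive indices in $I(c) := \{j \st c \in L(v_j)\}$. Hence, if $I(c)$ decomposes into maximal runs of lengths $\ell_1, \ldots, \ell_r$, then $k_L(c) = \sum_{s=1}^r \ceil{\ell_s / 2}$.

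Next, I would compute the effect of removing a single color $c$ from $L(v_1)$. If $c \notin L(v_1)$, the decomposition is unchanged; otherwise, only the first run of $I(c)$ shrinks, its length dropping from $L_1$ to $L_1 - 1$. Here $L_1 = n$ when $c \in A$ and $L_1 = f(c) - 1$ when $c \in L(v_1) - A$. Since $\ceil{L_1/2} - \ceil{(L_1 - 1)/2}$ equals $1$ when $L_1$ is odd and $0$ when $L_1$ is even, and since $n$ is odd, the contribution of $c$ drops by $1$ exactly when $c \in A$ or $f(c)$ is even, i.e., when $c \in A \cup \hat{X}_1$. Summing over $c \in p$ gives a total decrease of $\bigsizeof{(A \cup \hat{X}_1) \cap p}$ after removing $p$ from $L(v_1)$ alone. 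By the symmetric argument on the reversed path, justified by the remark that $Y_1 - A = \hat{X}_1$ so that $\hat{X}_n = X_n - A$ plays the analogous role on the right, removing $q$ from $L(v_n)$ alone decreases $S_L(P)$ by $\bigsizeof{(A \cup \hat{X}_n) \cap q}$.

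Finally, for the joint removal I would examine the overlap on $p \cap q$. When $c \notin A$, the first and last runs of $I(c)$ are distinct, so the two endpoint removals act independently and their effects on $k_L(c)$ simply sum, matching the sum of the two single-endpoint contributions. When $c \in A$, however, there is a single run $\{1, \ldots, n\}$ of odd length $n$; removing $c$ from both endpoints shrinks it to $\{2, \ldots, n-1\}$ of length $n - 2$, also odd, so the change in $k_L(c)$ is $\ceil{n/2} - \ceil{(n-2)/2} = 1$, rather than the $2$ a naive additive count would predict. The naive single-endpoint sums therefore overestimate the total decrease by exactly $\sizeof{A \cap p \cap q}$, which gives the claimed correction. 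The main delicate point is this $A$-correction, and it is precisely where the oddness of $n$ is essential: it ensures that the full run and its two-shorter truncation both have odd length, producing a single-unit overcount rather than a more intricate adjustment.
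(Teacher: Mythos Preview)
Your proof is correct and rests on the same color-by-color parity analysis as the paper's, but you organize it differently. The paper deletes $q$ from $L(v_n)$ first and then, for each $c\in p$, tracks a ``ripple'' through the sequence $X_1,X_2,\ldots$: removing $c$ from $X_1$ adds it to $X_2$, removes it from $X_3$, and so on until the process stops at $v_{f(c)}$, with the net change determined by the parity of $f(c)$. You instead encode the same phenomenon once and for all via the run decomposition $k_L(c)=\sum_s\lceil\ell_s/2\rceil$, so that each endpoint deletion simply shortens one run by one and the change is read off from $\lceil\ell/2\rceil-\lceil(\ell-1)/2\rceil$. The payoff shows in the $A\cap p\cap q$ correction: the paper argues that after $q$ has been removed, the ripple from $p$ for $c\in A\cap q$ now ``effectively'' terminates early, whereas your version just notes that both deletions shrink the same odd-length run $\{1,\ldots,n\}$ down to the still-odd $\{2,\ldots,n-1\}$, giving a drop of $1$ rather than $2$. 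The content is the same; your packaging is a bit more uniform and makes the role of the oddness of $n$ more transparent.
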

\begin{figure}
  \centering
  \begin{subfigure}[t]{.4\textwidth}
  \begin{tabular}{r|c|c|c}
    &\tikz[remember picture]{\apoint{v_1} (v1) at (0cm, 0cm) {};}&
    \tikz[remember picture]{\apoint{v_2} (v2) at (0cm, 0cm) {};}&
    \tikz[remember picture]{\apoint{v_3} (v3) at (0cm, 0cm) {};}\\\hline
    $L$& abcd & abef & adeg \\\hline
    $X_i$ & abcd & ef & adg \\\hline
    $\hat{X}_i$ & cd && dg
  \end{tabular}
  \tikz[remember picture, overlay]{\draw (v1) -- (v2) -- (v3);}
  \end{subfigure}
  \begin{subfigure}[t]{.4\textwidth}
  \begin{tabular}{r|c|c|c}
    &\tikz[remember picture]{\apoint{v_1} (v1) at (0cm, 0cm) {};}&
    \tikz[remember picture]{\apoint{v_2} (v2) at (0cm, 0cm) {};}&
    \tikz[remember picture]{\apoint{v_3} (v3) at (0cm, 0cm) {};}\\\hline
    $\Ltaketwo{S}{T}$& cd & abef & de \\\hline
    $X_i$ & cd & abef & d\\
    \multicolumn{4}{c}{\vphantom{$\hat{X}_i$}}
  \end{tabular}
  \tikz[remember picture, overlay]{\draw (v1) -- (v2) -- (v3);}
  \end{subfigure}
  \caption{Example computations for $\Ltaketwo{S}{T}$ when $(S,T) = (\textrm{ab}, \textrm{ag})$.}
  \label{fig:removecolors}
\end{figure}
\begin{proof}
  It suffices to consider the effect of deleting just one colour $c$.
  First we consider deleting the colours in $T$ from $L(v_n)$. Clearly,
  if $c \notin X_n$ then deleting the colour $c$ from $L(v_n)$ has no
  effect on $S_L(P)$, since it does not change any $X_i$. On the other
  hand, if $c \in X_n = A \cup \hat{X}_n$, then deleting the colour $c$
  from $L(v_n)$ decreases $S_L(P)$ by exactly $1$.

  Next we consider deleting a colour $c$ from $L(v_1)$. Here, unlike
  with $L(v_n)$, the changes in $X_1$ can ``ripple'' through later
  $X_i$, as shown in Figure~\ref{fig:removecolors}. If $c \notin X_1 =
  L(v_1)$, then deleting $c$ from $L(v_1)$ clearly does not change any
  $X_i$, hence does not change $S_L(P)$.

  Now suppose $c \in X_1 - A$.  Deleting $c$ from $L(v_1)$ causes $c$
  to be removed from $X_1$. However, if $c \in L(v_2)$,
  we gain $c$ in $X_2$. Now this may cause us to lose $c$
  in $X_3$, gain $c$ in $X_4$, and so forth. The process continues
  until we reach an index $i$ for which $c \notin L(v_i)$.
  If $i$ is odd, then   we lose $c$ from the sets
  $X_1, X_3, \ldots, X_{i-2}$ and gain $c$ in the sets $X_2, X_4,
  \ldots, X_{i-1}$. So there is no net change in $S_L(P)$.
  If $i$ is even, then  we lose $c$ from the sets
  $X_1, X_3, \ldots, X_{i-1}$ and gain $c$ in the sets $X_2, X_4,
  \ldots, X_{i-2}$. So $S_L(P)$ has decreased by $1$.

  Finally, suppose $c \in X_1 \cap A$. Deleting $c$ from $L(v_1)$
  causes the same ripple process described above, terminating when we
  try to delete $c$ from $X_n$ (since $n$ is odd). If $c \notin T$,
  then as before, this causes $S_L(P)$ to decrease by $1$. However, if
  $c \in T$, then we have \emph{already} deleted $c$ from $X_n$, so in
  this step we really gain and lose $c$ an equal number of times.
  Thus, when $c \in A \cap S \cap T$,
  deleting $c$ from both endpoints of $L$ decreases $S_L(P)$ by
  exactly $1$, but such colours are double-counted in the sum
  $\sizeof{(A \cup \hat{X}_1) \cap S} + \sizeof{(A \cup \hat{X_n})
    \cap T}$. The final term $\sizeof{A \cap S \cap T}$ corrects for
  this overcount.
\end{proof}



Together, Lemma~\ref{lem:xuding} and Lemma~\ref{lem:oddhat}  allow us to ignore the details of the list assignment and
focus on the sets $\hat{X}_1, \hat{X}_n, A$, as described below.
\begin{definition}
For a pair of colour sets $S,T$, the \emph{damage} of $(S,T)$ with respect to $L$ and $P$ is written
$\dam_{L,P}(S,T)$ and defined by
\[\dam_{L,P}(S,T)=S_L(P) -   S_{\Ltaketwo{S}{T}}(P).\]
\end{definition}
Lemma \ref{lem:oddhat} shows that if $P$ has an odd number of
vertices, then given a pair $S,T$ of colour sets, the damage
$\dam_{L,P}(S,T)$ just depends on $\hat{X}_1, \hat{X_n}$ and $A$, and
in particular
\begin{align}
  \label{eq:damformula}
\dam_{L,P}(S,T) &= \sizeof{(A \cup \hat{X}_1) \cap S} + \sizeof{(A \cup \hat{X}_n) \cap T} - \sizeof{A \cap S \cap T} \\
&= \sizeof{\hat{X}_1 \cap S} + \sizeof{\hat{X}_n \cap T} + \sizeof{A \cap (S \cup T)}.\nonumber
\end{align}
In the example of Figure~\ref{fig:removecolors}, we have $\dam_{L,P}(S,T) = 2$.
\begin{lemma}\label{lem:abstract}
  Let $G$ be a graph, and let $X \subset V(G)$ be a set of vertices
  such that every component of $G-X$ is a path with an odd number of
  vertices.  Assume that for each component $P$ of $G-X$, only the two
  end vertices of $P$ have neighbors in $X$. Let $L$ be a list
  assignment on $G$ with $\sizeof{L(v)}=4m$ for all $v \in V(G)$. The
  graph $G$ is $\ab{L}{2m}$-colourable if and only if $G[X]$ has an
  $\ab{L}{2m}$-colouring $\phi$ such that for every path $P$ in $G-X$
  with vertices $v_1, \ldots, v_n$ in order, the following conditions hold:
  \begin{enumerate}[(i)]
  \item $\sizeof{L(v_1) \cap \phi(N_X(v_1))} \leq 2m$, \\
  \item $\sizeof{L(v_n) \cap \phi(N_X(v_n))} \leq 2m$, and \\
  \item $\dam_{L,P}(\phi(N_X(v_1)), \phi(N_X(v_n)))
    \leq S_L(P) - 2mn$.
  \end{enumerate}
\end{lemma}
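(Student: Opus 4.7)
The plan is to reduce both directions of the equivalence to Lemma~\ref{lem:xuding} applied separately to each path component of $G - X$, after translating conditions (i)--(iii) into the hypotheses of that lemma for an appropriately restricted list assignment on each path.

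For the forward ($\Leftarrow$) direction, suppose $\phi$ is an $\ab{L}{2m}$-colouring of $G[X]$ satisfying (i)--(iii). For each path $P = v_1 v_2 \cdots v_n$ in $G - X$, define $L^* = \Ltaketwo{\phi(N_X(v_1))}{\phi(N_X(v_n))}$. Conditions (i) and (ii) give $\sizeof{L^*(v_1)} \geq 2m$ and $\sizeof{L^*(v_n)} \geq 2m$, meeting the size hypothesis of Lemma~\ref{lem:xuding}. By the definition of damage, condition (iii) is equivalent to $S_{L^*}(P) \geq 2mn$, so Lemma~\ref{lem:xuding} yields an $\ab{L^*}{2m}$-colouring of $P$. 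Because distinct components of $G - X$ share no edges, these path colourings combine with $\phi$ into an $\ab{L}{2m}$-colouring of $G$.

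For the reverse ($\Rightarrow$) direction, take any $\ab{L}{2m}$-colouring $\psi$ of $G$ and set $\phi = \psi|_X$. For each path $P$, the restriction of $\psi$ to $V(P)$ is an $\ab{L^*}{2m}$-colouring of $P$ with $L^*$ as above. The existence of this colouring forces (i), since $\psi(v_1)$ is a $2m$-element subset of $L(v_1) \setminus \phi(N_X(v_1))$, and symmetrically forces (ii). Then the ``only if'' direction of Lemma~\ref{lem:xuding} gives $S_{L^*}(P) \geq 2mn$, which rearranges via the definition of damage to condition (iii).

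There is no substantive obstacle here: this is essentially a bookkeeping argument, and the crucial design point is that ``damage'' was defined precisely so that condition (iii) matches the sum bound $S_{L^*}(P) \geq 2mn$ appearing in Lemma~\ref{lem:xuding}. Once this translation is verified, the biconditional follows by applying Lemma~\ref{lem:xuding} independently to each path component of $G - X$.
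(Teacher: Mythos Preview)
Your proof is correct and follows essentially the same approach as the paper's: reduce the extendability of $\phi$ to each path $P$ to the criterion $S_{L^*}(P)\ge 2mn$ of Lemma~\ref{lem:xuding}, and translate that criterion into condition~(iii) via the definition of damage. The paper's version is terser (it bundles both directions into a single ``extends iff (i)--(iii)'' claim and cites Lemmas~\ref{lem:xuding} and~\ref{lem:oddhat} together), but your explicit treatment of each direction and your observation that only the \emph{definition} of damage---not the formula of Lemma~\ref{lem:oddhat}---is needed here are both accurate.
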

\begin{proof}
  Clearly, $G$ is $\ab{L}{2m}$-colourable if and only if $G[X]$ has an
  $\ab{L}{2m}$-colouring $\phi$ that extends to $G$, i.e., extends to each of the paths
  $P$ in $G-X$. For each path
  $P$ of $G-X$, we show that $\phi$ extends to $P$ if and only if $\phi$
  satisfies conditions (i)--(iii). Conditions (i) and (ii) are clearly
  necessary, so it suffices to show that when Conditions (i) and (ii)
  hold, $\phi$ extends to $P$ if and only if Condition~(iii)
  holds. This follows from Lemma~\ref{lem:xuding} and
  Lemma~\ref{lem:oddhat}.
\end{proof}

\section{Technical Lemmas}\label{sec:lemmas}
To apply Lemma \ref{lem:abstract}, we need to find lower bounds for
$S_L(P)$ and upper bounds for $\dam_{L,P}(S,T)$.  In this section, we
collect some technical lemmas regarding such bounds.
\begin{lemma}\label{lem:extra}
  If $L$ is a list assignment on an $n$-vertex path $P$, where $n$ is odd and $\sizeof{L(v_i)} = 4m$
  for all $i$, then
  \[ S_L(P) = 2nm - 2m + \sum_{\substack{\text{$k$ even}\\k < n}}\sizeof{X_{k-1} - L(v_k)} + \sizeof{X_n}. \]
\end{lemma}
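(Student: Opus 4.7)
The strategy is to pair consecutive terms in the sum defining $S_L(P)$, exploiting the parity of $n$.

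First I would establish the key identity: for each $k \geq 2$,
\[ \sizeof{X_{k-1}} + \sizeof{X_k} = \sizeof{L(v_k)} + \sizeof{X_{k-1} - L(v_k)} = 4m + \sizeof{X_{k-1} - L(v_k)}. \]
This follows immediately from the definition $X_k = L(v_k) - X_{k-1}$, which yields $\sizeof{X_k} = \sizeof{L(v_k)} - \sizeof{L(v_k) \cap X_{k-1}}$, together with $\sizeof{X_{k-1}} = \sizeof{X_{k-1} \cap L(v_k)} + \sizeof{X_{k-1} - L(v_k)}$.

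Next, since $n$ is odd, I would partition $\{1, 2, \ldots, n\}$ into the $(n-1)/2$ consecutive pairs $\{1,2\}, \{3,4\}, \ldots, \{n-2, n-1\}$ together with the singleton $\{n\}$. Each such pair has the form $\{k-1, k\}$ with $k$ even and $k < n$, so
\[ S_L(P) = \sum_{i=1}^n \sizeof{X_i} = \sum_{\substack{k \text{ even} \\ k < n}} \left(\sizeof{X_{k-1}} + \sizeof{X_k}\right) + \sizeof{X_n}. \]
Plugging in the identity from the first step and noting that there are exactly $(n-1)/2$ even values of $k$ with $k < n$, the constant contributions sum to $4m \cdot (n-1)/2 = 2m(n-1) = 2nm - 2m$, which gives the claimed formula.

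There is no real obstacle here; the lemma is essentially a bookkeeping identity. The only point that requires any attention is ensuring that the pairing exhausts the indices $\{1, \ldots, n-1\}$ cleanly, which is where the hypothesis that $n$ is odd is used. The term $\sizeof{X_n}$ at the end is simply the leftover index that cannot be paired.
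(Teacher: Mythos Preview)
Your proof is correct and relies on the same key identity as the paper, namely $\sizeof{X_{k-1}} + \sizeof{X_k} = 4m + \sizeof{X_{k-1} - L(v_k)}$. The only difference is packaging: the paper proves the lemma by induction on odd $n$, peeling off $v_{n-1}$ and $v_n$ at each step, whereas you sum all the pairs at once; the two arguments are structurally identical.
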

\begin{proof}
  We use induction on $n$. When $n=1$, the sum is empty and $2nm-2m = 0$, so
  the claim is just $S_L(P) = \sizeof{X_1}$, which is clearly
  true. Assume that $n > 1$ and the claim holds for smaller odd $n$.
  Let $P' = P - \{v_{n-1}, v_n\}$ and let $L'$ be the restriction of $L$ to $P'$,
  so that $S_L(P) = S_{L'}(P') + \sizeof{X_{n-1}} + \sizeof{X_n}$. Applying the
  induction hypothesis to $P'$ yields
  \[ S_L(P) = \left(2nm - 6m + \sum_{\substack{\text{$k$ even}\\k < n -2}}\sizeof{X_{k-1} - L(v_k)} + \sizeof{X_{n-2}}\right) + \sizeof{X_{n-1}} + \sizeof{X_n} \]
  Observe that
  \begin{align*}
    \sizeof{X_{n-1}} &= \sizeof{L(v_{n-1}) - X_{n-2}} \\
    &= \sizeof{L(v_{n-1})} - \sizeof{X_{n-2}} + \sizeof{X_{n-2} - L(v_{n-1})} \\
    &= 4m - \sizeof{X_{n-2}} + \sizeof{X_{n-2} - L(v_{n-1})}
  \end{align*}
  Combining these terms with the terms from $S_{L'}(P')$ gives the
  desired expression for $S_L(P)$.
\end{proof}
\begin{lemma}\label{lem:hatsum}
  If $L$ is a list assignment on an $n$-vertex path $P$, where $n$ is odd and $\sizeof{L(v_i)} = 4m$
  for all $i$, then
  \[ S_L(P) \geq 2nm-2m + \sizeof{\hat{X}_1} + \sizeof{\hat{X}_n} + \sizeof{A}.
  \]
\end{lemma}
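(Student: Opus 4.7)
The plan is to start from the exact formula provided by Lemma~\ref{lem:extra},
\[ S_L(P) = 2nm - 2m + \sum_{\substack{\text{$k$ even}\\k < n}}\sizeof{X_{k-1} - L(v_k)} + \sizeof{X_n}, \]
and derive the desired lower bound by establishing two sub-claims: first, that $\sizeof{X_n} = \sizeof{\hat{X}_n} + \sizeof{A}$; second, that $\sum_{\text{$k$ even},\,k < n}\sizeof{X_{k-1} - L(v_k)} \geq \sizeof{\hat{X}_1}$.

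The key fact underlying both claims is a parity observation about membership of colours in the sets $X_i$. Namely, if $c$ lies in $L(v_1), L(v_2), \ldots, L(v_j)$ for some $j$, then from $X_1 = L(v_1)$ and $X_i = L(v_i) - X_{i-1}$ a routine induction on $i$ shows that $c \in X_i$ for odd $i \leq j$ and $c \notin X_i$ for even $i \leq j$. I would prove this alternation once and reuse it for both claims.

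For the first claim, any $c \in A$ belongs to every $L(v_i)$, so the alternation applies up through $i = n$; since $n$ is odd, $c \in X_n$. Hence $A \subseteq X_n$, and combining this with $\hat{X}_n = X_n - A$ gives $\sizeof{X_n} = \sizeof{\hat{X}_n} + \sizeof{A}$. For the second claim, let $c \in \hat{X}_1$ and set $k = f(c)$. By the definition of $\hat{X}_1$, $k$ is even and $c \in L(v_i)$ for $i < k$ but $c \notin L(v_k)$. The alternation above applied through $i = k-1$ (which is odd) gives $c \in X_{k-1}$, so $c \in X_{k-1} - L(v_k)$. Moreover, since $k$ is even and $n$ is odd, $k \leq n - 1$, so the index $k$ genuinely appears in the sum. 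Partitioning $\hat{X}_1$ by the value of $f(c)$ then shows that different colours contribute to disjoint subsets of $X_{k-1} - L(v_k)$ across the various even $k$, yielding the inequality.

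Adding the two bounds to the constant $2nm - 2m$ furnished by Lemma~\ref{lem:extra} gives exactly the stated inequality. I do not anticipate a serious obstacle here; the entire argument amounts to careful bookkeeping of which colours appear in which $X_i$, and the odd-$n$ hypothesis is used in a clean way both to place $A$ inside $X_n$ and to guarantee that $f(c)$ for $c \in \hat{X}_1$ lies strictly below $n$.
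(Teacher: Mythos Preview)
Your proposal is correct and follows essentially the same approach as the paper: both apply Lemma~\ref{lem:extra}, observe that $\sizeof{X_n} = \sizeof{\hat{X}_n} + \sizeof{A}$, and note that each element of $\hat{X}_1$ contributes to some term $X_{k-1} - L(v_k)$ with $k$ even in the sum. Your write-up simply spells out the alternation argument and the bound $f(c) \leq n-1$ that the paper leaves implicit.
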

\begin{proof}
  By the definition of $\hat{X}_1$, every element of $\hat{X}_1$ appears
  in a set of the form $X_{k-1} - L(v_k)$ where $k$ is even. Thus,
  the claim follows from Lemma~\ref{lem:extra}, since $\sizeof{X_n} = \sizeof{\hat{X}_n} + \sizeof{A}$.
\end{proof}
\begin{lemma}\label{lem:trivbound}
  If $L$ is a list assignment on an $n$-vertex path $P$, where $n$ is
  odd and $\sizeof{L(v_i)} = 4m$ for all $i$, then $S_L(P) \geq 2nm +
  2m$.
\end{lemma}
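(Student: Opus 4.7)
The plan is to prove the lemma directly by bounding $S_L(P) = \sum_{i=1}^n \sizeof{X_i}$ through a pairing argument, without needing to invoke Lemma~\ref{lem:extra} or Lemma~\ref{lem:hatsum}. Two elementary observations about the sets $X_i$ drive the proof:
\begin{enumerate}[(a)]
\item $\sizeof{X_1} = \sizeof{L(v_1)} = 4m$.
\item For each $i \geq 2$, since $X_i = L(v_i) - X_{i-1}$, we have $X_{i-1} \cup X_i \supseteq L(v_i)$, so $\sizeof{X_{i-1}} + \sizeof{X_i} \geq \sizeof{L(v_i)} = 4m$.
\end{enumerate}

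Next I would use the hypothesis that $n$ is odd, writing $n = 2k+1$, and group the terms of $S_L(P)$ as
\[
S_L(P) = \sizeof{X_1} + \sum_{j=1}^{k}\bigl(\sizeof{X_{2j}} + \sizeof{X_{2j+1}}\bigr).
\]
Observation (a) gives $\sizeof{X_1} \geq 4m$ and observation (b), applied with $i = 2j+1$ for each $j \in \{1, \ldots, k\}$, gives $\sizeof{X_{2j}} + \sizeof{X_{2j+1}} \geq 4m$. Summing these $k+1$ contributions yields
\[
S_L(P) \;\geq\; 4m(k+1) \;=\; 4m \cdot \tfrac{n+1}{2} \;=\; 2mn + 2m,
\]
as claimed. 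There is no serious obstacle here: the oddness of $n$ is exactly what is needed so that the singleton group $\{X_1\}$ followed by disjoint consecutive pairs exhausts the sum, and both bounds (a) and (b) follow immediately from the definitions of $X_i$ and the hypothesis $\sizeof{L(v_i)} = 4m$.
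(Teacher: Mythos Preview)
Your proof is correct and is essentially the same as the paper's: both isolate $\sizeof{X_1}=4m$, observe that $\sizeof{X_{i-1}}+\sizeof{X_i}\ge 4m$ for $i\ge 2$, and then use the oddness of $n$ to pair the remaining terms. Your write-up just makes the pairing explicit.
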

\begin{proof}
  This follows immediately from the definition $S_L(P) =
  \sum_{i=1}^n\sizeof{X_i}$ and the observations that $\sizeof{X_1} =
  \sizeof{L(v_1)} = 4m$ and that $\sizeof{X_i} + \sizeof{X_{i+1}} \geq
  4m$ for $i > 1$.
\end{proof}
\section{$\ab{4}{2}$-choosable Theta Graphs}\label{sec:goodtheta}
In this section, we show that $\Theta_{r,s,t}$ is
$\ab{4}{2}$-choosable if $r,s,t$ have the same parity and
$\min\{r,s,t\} \leq 2$.  In Section~\ref{sec:ctx}, we will show that
$\min\{r,s,t\} \geq 3$ implies that $\Theta_{r,s,t}$ is not
$\ab{4}{2}$-choosable. As we are only concerned with
$\ab{4}{2}$-choosability, we will tacitly assume that all list
assignments considered in this section have $\sizeof{L(v)} = 4$ for
all $v \in V(G)$.

We first use an observation of Voigt to restrict to the case where $r,s,t$
are even.
\begin{lemma}[Lemma 4.3 of Voigt~\cite{voigt}]\label{lem:rubin}
  Let $G$ be a graph, let $v \in V(G)$, and let $G'$ be obtained
  from $G$ by deleting $v$ and merging its neighbors. If $G$
  is $\ab{4m}{2m}$-choosable, then $G'$ is $\ab{4m}{2m}$-choosable.
\end{lemma}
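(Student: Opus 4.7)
The plan is to prove the contrapositive. Suppose $G'$ is not $\ab{4}{2}$-choosable, witnessed by a list assignment $L'$ with $|L'(x)| = 4$ on $V(G')$ that admits no $\ab{L'}{2}$-colouring. Let $u_1, \ldots, u_k$ be the neighbors of $v$ in $G$ and let $w$ denote their common image in $G'$. I would lift $L'$ to a list assignment $L$ on $G$ by setting $L(x) = L'(x)$ for $x \notin N[v]$ and $L(u_i) = L(v) = L'(w)$ for every $i$; each list still has size $4$, so $L$ is a legal $4$-list assignment on $G$.

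Since $G$ is $\ab{4}{2}$-choosable, $L$ admits a $2$-tuple colouring $f$. The key pigeonhole step is that $f(v)$ and each $f(u_i)$ are $2$-subsets of the common $4$-set $L'(w)$, and $f(v) \cap f(u_i) = \emptyset$. Hence $f(u_i) = L'(w) \setminus f(v)$ for every $i$, so all neighbors of $v$ receive the same colour set; this forced equality is the one nontrivial observation in the argument.

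This common value makes $f$ descend to $G'$: set $f'(x) = f(x)$ for $x \neq w$ and $f'(w) = f(u_1)$. Every edge of $G'$ not incident to $w$ is an edge of $G - v$, and every edge $wx$ of $G'$ comes from some edge $u_i x$ of $G$, so $f'(w) \cap f'(x) = f(u_i) \cap f(x) = \emptyset$; thus $f'$ is a proper $2$-tuple colouring. Since $f'(x) \subseteq L'(x)$ at every vertex, $f'$ is an $\ab{L'}{2}$-colouring, contradicting the choice of $L'$.

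There is essentially no obstacle here; the only degenerate situation worth flagging is when two neighbors $u_i, u_j$ of $v$ are adjacent in $G$, so that merging creates a loop in $G'$. In that case $G'$ has no proper $2$-tuple colouring at all, so it is trivially not $\ab{4}{2}$-choosable and the conclusion holds vacuously (equivalently, the forced equality $f(u_i) = f(u_j)$ violates the edge $u_i u_j$, so no such $f$ exists and $G$ itself is not $\ab{4}{2}$-choosable under $L$, again giving a contradiction).
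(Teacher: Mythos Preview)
Your argument is correct and is precisely the standard proof of this fact: give $v$ and all its neighbours the common list $L'(w)$, use that in a $4$-set the complement of a $2$-set is the unique disjoint $2$-set to force $f(u_1)=\cdots=f(u_k)$, and then collapse. The handling of the degenerate loop case is also fine.

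Note, however, that the paper does not actually supply its own proof of this lemma: it is stated with attribution to Voigt and followed only by a historical remark that the reduction goes back to Erd\H{o}s--Rubin--Taylor for $2$-choosability and was extended by Voigt to $\ab{2m}{m}$-choosability. So there is no in-paper proof to compare against; your write-up simply fills in the (short) omitted argument.
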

The transformation used in Lemma~\ref{lem:rubin} was first used in
\cite{rubin}, which observed that if $G$ is $2$-choosable, then $G'$
is also $2$-choosable.  Voigt~\cite{voigt} made the stronger
observation that if $G$ is $\ab{2m}{m}$-choosable, then $G'$ is also
$\ab{2m}{m}$-choosable. While Voigt imposed the additional assumption
that $d(v)=2$, this assumption is not necessary.
\begin{proof}[Proof of Lemma~\ref{lem:rubin}]
  We may assume that $d(v) \geq 2$, as otherwise $G'$ is just
  a subgraph of $G$.
  Let $v'$ be the merged vertex in $G'$, and let $L'$ be a list
  assignment on $G'$ such that $\sizeof{L'(w)} = 4m$ for all $w \in
  V(G')$. Define a list assignment $L$ on $G$ as follows:
  \[ L(w) =
  \begin{cases}
    L'(v'),& \text{ if $w = v$ or $w \in N(v)$,} \\
    L'(w),& \text{ otherwise.}
  \end{cases} \] Since $G$ is $\ab{4m}{2m}$-choosable, it has some
  proper $L$-colouring $\phi$.  For all $w \in N(v)$, we have $\phi(w) \cap
  \phi(v) = \emptyset$.  Since $L(w) = L(v)$ and since $\phi(w),
  \phi(v) \subset L(v)$ with $\sizeof{\phi(w)} + \sizeof{\phi(v)} =
  \sizeof{L(v)}$, this implies that $\phi(w) = L(v) - \phi(v)$ for all
  $w \in N(v)$.  We define an $L'$-colouring $\phi'$ of $G'$ by putting
  $\phi'(v') = L(v)-\phi(v)$ and putting $\phi'(w) = \phi(w)$ for all
  $w \in V(G') - v'$. Since $\phi$ was a proper $L$-colouring, we see
  that $\phi'$ is a proper $L'$-colouring. As $L'$ was arbitrary, we
  conclude that $G'$ is $\ab{4m}{2m}$-choosable.
\end{proof}
\begin{corollary}
  If $\Theta_{2, 2r, 2s}$ is $\ab{4}{2}$-choosable, then $\Theta_{1, 2r-1, 2s-1}$
  is $\ab{4}{2}$-choosable.
\end{corollary}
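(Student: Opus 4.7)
The plan is to invoke Lemma~\ref{lem:rubin} directly and verify that one application of the delete-and-merge operation converts $\Theta_{2,2r,2s}$ into $\Theta_{1,2r-1,2s-1}$. Concretely, label the two degree-$3$ ``end vertices'' of $\Theta_{2,2r,2s}$ as $u$ and $v$, and let the three internally disjoint paths joining them be $u\text{--}w\text{--}v$ (length $2$), $u\text{--}a_1\text{--}\cdots\text{--}a_{2r-1}\text{--}v$ (length $2r$), and $u\text{--}b_1\text{--}\cdots\text{--}b_{2s-1}\text{--}v$ (length $2s$).

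Now I apply the operation of Lemma~\ref{lem:rubin} to the vertex $u$: delete $u$ and merge its three neighbors $w$, $a_1$, $b_1$ into a single new vertex $y$. Each of $w$, $a_1$, $b_1$ has exactly one other neighbor in the original graph, namely $v$, $a_2$, $b_2$ respectively, so in the resulting graph $G'$ the vertex $y$ is adjacent precisely to $v$, $a_2$, and $b_2$. Nothing else in the graph is altered, so $y$ and $v$ are now joined by three internally disjoint paths: the single edge $yv$ (inherited from $wv$), the path $y\text{--}a_2\text{--}\cdots\text{--}a_{2r-1}\text{--}v$ of length $2r-1$, and the path $y\text{--}b_2\text{--}\cdots\text{--}b_{2s-1}\text{--}v$ of length $2s-1$. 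This is exactly $\Theta_{1,2r-1,2s-1}$.

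By Lemma~\ref{lem:rubin}, $\ab{4}{2}$-choosability of $\Theta_{2,2r,2s}$ implies $\ab{4}{2}$-choosability of $G'=\Theta_{1,2r-1,2s-1}$, which is the conclusion. There is no real obstacle in this argument beyond the bookkeeping of checking the path lengths after identification; the content is entirely in Lemma~\ref{lem:rubin}.
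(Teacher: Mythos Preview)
Your proof is correct and follows exactly the same approach as the paper: apply Lemma~\ref{lem:rubin} to a degree-$3$ vertex of $\Theta_{2,2r,2s}$ and observe that the resulting graph is $\Theta_{1,2r-1,2s-1}$. The paper's proof is a single sentence stating this; you have simply supplied the explicit bookkeeping that verifies the path lengths after the merge.
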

\begin{proof}
  Applying the operation of Lemma~\ref{lem:rubin} to a vertex $v$ of degree~$3$
  transforms $\Theta_{2,2r,2s}$ into $\Theta_{1, 2r-1, 2s-1}$.
\end{proof}
It therefore suffices to show that $\Theta_{2,2r,2s}$ is
$\ab{4}{2}$-choosable for all $r,s \geq 1$. Similar techniques will
allow us to deal with cycles sharing a vertex or joined by a path.
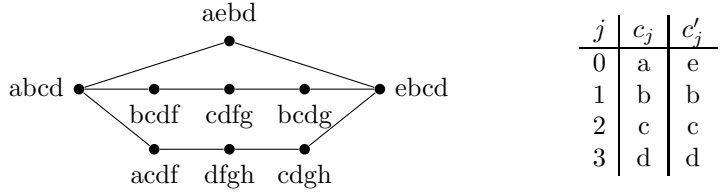
\begin{figure}
  \centering
  \begin{subfigure}{.6\textwidth}
  \begin{tikzpicture}[xscale=1, yscale=.8]
    \anonlist{left}{abcd} (u) at (0cm, 0cm) {};
    \anonlist{right}{ebcd} (v) at (4cm, 0cm) {};
    \anonlist{below}{acdf} (x1) at (1cm, -1cm) {};
    \anonlist{below}{dfgh} (x2) at (2cm, -1cm) {};
    \anonlist{below}{cdgh} (x3) at (3cm, -1cm) {};
    \anonlist{below}{bcdf} (y1) at (1cm, 0cm) {};
    \anonlist{below}{cdfg} (y2) at (2cm, 0cm) {};
    \anonlist{below}{bcdg} (y3) at (3cm, 0cm) {};
    \anonlist{above}{aebd} (z1) at (2cm, .8cm) {};
    \draw (u) -- (x1) -- (x2) -- (x3)  -- (v);
    \draw (u) -- (y1) -- (y2) -- (y3)  -- (v);
    \draw (u) -- (z1) -- (v);
  \end{tikzpicture}
  \end{subfigure}
  \begin{subfigure}{.2\textwidth}
    \begin{tabular}{r|c|c}
      $j$ & $c_j$ & $c'_j$ \\\hline
      0 & a & e \\
      1 & b & b \\
      2 & c & c \\
      3 & d & d
    \end{tabular}
  \end{subfigure}
  \caption{The graph $\Theta_{2,4,4}$, with a list assignment and an
    associated coupling. From top to bottom, the internal paths are
    $P^0$, $P^1$, and $P^2$.}
  \label{fig:theta244}
\end{figure}

We now 
introduce some notation for various parts of theta graphs; Figure~\ref{fig:theta244} shows $\Theta_{2,4,4}$, as a reference.
\begin{definition}
  The vertices of degree $3$ in a theta graph are called $u$ and $v$.
  The \emph{internal paths} of a theta graph are the paths in
  $G-\{u,v\}$; the endpoints of the internal paths are the neighbors
  of $u$ and $v$.
\end{definition}
Fix a list assignment $L$, and let $L(u) = \{c_0, c_1, c_2, c_3\}$ and
$L(v) = \{c'_0, c'_1, c'_2, c'_3\}$, where the colours are indexed so
that $c'_j = c_j$ whenever $c_j \in L(u) \cap L(v)$. Note that this
indexing implies that $\{c_i, c'_i\} \cap \{c_j, c'_j\} = \emptyset$
whenever $i \neq j$.
\begin{definition}
  For a fixed indexing of $L(u)$ and $L(v)$, a \emph{couple} is a
  tuple of the form $(c_j, c'_j)$ for $j \in \{0,1,2,3\}$. When we
  write a couple, we suppress the parentheses and simply write
  $c_jc'_j$.  A \emph{pair} is a tuple $(S,T)$ with $S \subset L(u)$,
  $T \subset L(v)$, and $\sizeof{S} = \sizeof{T} = 2$.  A \emph{simple
    pair} is a pair $(S,T)$ such that for all $c_j \in S$, we also
  have $c'_j \in T$. A \emph{simple solution} is a simple pair $(S,T)$
  such that $\dam_{L,P}(S,T) \leq S_L(P) - 2\sizeof{V(P)}$ for all
  internal paths $P$.
\end{definition}
Observe that the definition of a couple and a simple pair depends on
the indexing of the colours of $L(u)$ and colours in $L(v)$. A simple
solution can be interpreted as a precolouring of $\{u,v\}$ which
extends (via Lemma~\ref{lem:abstract}) to all internal paths of the
theta graph. With any fixed indexing of $L(u)$ and $L(v)$, we first
try to find a simple solution.  We show that a simple solution exists
unless $L$ has a very specific form. Then we address this form as a
special case.

Equation~\eqref{eq:damformula} implies that if $S,T$ is a simple pair, then
\[ \dam_{L,P}(S,T) = \sum_{c_j \in S}\dam_{L,P}(\{c_j\}, \{c'_j\}). \]
In other words, when $(S,T)$ is a simple pair, we can simply calculate the damage of each
couple in $(S,T)$ independently, and add them together to obtain $\dam_{L,P}(S,T)$.
Moreover, for each $j$, we have $\dam_{L,P}(\{c_j\}, \{c'_j\})  \in \{0, 1, 2\}$.
\begin{definition}
  When $L$ is a list assignment on a theta graph,
  \begin{itemize}
  \item The couple $c_jc'_j$ is \emph{heavy} for the internal path $P$ if $\dam_{L,P}(\{c_j\}, \{c'_j\}) = 2$;
  \item The couple $c_jc'_j$ is \emph{light} for the internal path $P$ if $\dam_{L,P}(\{c_j\}, \{c'_j\}) = 1$;
  \item The couple $c_jc'_j$ is \emph{safe} for the internal path $P$ if $\dam_{L,P}(\{c_j\}, \{c'_j\}) = 0$.
  \end{itemize}
\end{definition}
\begin{definition}
  When $L$ is a list assignment on a theta graph, we say that an
  internal path $P$ \emph{blocks} a pair $(S,T)$ if $\dam_{L,P}(S,T) >
  S_L(P) - 2\sizeof{V(P)}$, i.e., if we cannot extend the partial
  colouring $\phi(u) = S$, $\phi(v) = T$ to all vertices of $P$.
\end{definition}
\begin{example}
  For the list assignment shown in Figure~\ref{fig:theta244}, the
  couple $\mathrm{ae}$ is heavy for $P^0$, safe for $P^1$, and light for $P^2$.
  The path $P^2$ blocks the simple pair $(\mathrm{ac}, \mathrm{ec})$.
\end{example}

Now we count how many simple pairs are blocked by each internal path.
It will be helpful to prove this lemma for more general theta graphs than $\Theta_{r,s,t}$.
\begin{lemma}\label{lem:intblock}
  Let $r_1, \ldots, r_k$ be positive integers, and let $L$ be a
  list assignment on $\Theta_{2r_1, \ldots, 2r_k}$.  Each internal path
  $P$ blocks at most $2$ simple pairs, and if $P$ blocks $2$ simple
  pairs, then $S_L(P) = 2\sizeof{V(P)} + 2$, and $P$ has one heavy
  couple and two light couples.
\end{lemma}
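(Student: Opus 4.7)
The plan is to leverage the fact that for a simple pair $(p,q)$ coming from couples $c_i c'_i$ and $c_j c'_j$, the damage decomposes as $\dam_{L,P}(p,q) = \dam_{L,P}(\{c_i\},\{c'_i\}) + \dam_{L,P}(\{c_j\},\{c'_j\})$, with each summand in $\{0,1,2\}$. Writing $n = \sizeof{V(P)}$ and $B = S_L(P) - 2n$, Lemma~\ref{lem:trivbound} (with $m=1$) gives $B \geq 2$, and a simple pair is blocked precisely when its damage strictly exceeds $B$. Since each couple's damage is at most $2$, no pair can be blocked when $B \geq 4$; when $B = 3$, the blocked pairs are exactly those formed by two heavy couples, contributing $\binom{h}{2}$ in total; and when $B = 2$, the blocked pairs have either two heavy couples or one heavy and one light couple, contributing $\binom{h}{2} + h\ell$.

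The key numerical input is the inequality $2h + \ell \leq B + 2$, where $h$ and $\ell$ denote the numbers of heavy and light couples. To establish this, I would show that each heavy couple contributes at least $2$ and each light couple at least $1$ to the total $\sizeof{A} + \sizeof{\hat{X}_1} + \sizeof{\hat{X}_n}$, via case analysis on whether the couple is paired ($c_j = c'_j$) or unpaired. A heavy paired couple forces $c_j = c'_j \in \hat{X}_1 \cap \hat{X}_n$ (so $c_j \notin A$), giving one contribution each to $\sizeof{\hat{X}_1}$ and $\sizeof{\hat{X}_n}$; a heavy unpaired couple has $c_j \in A \cup \hat{X}_1$ and $c'_j \in A \cup \hat{X}_n$, giving two contributions through the distinct colours $c_j, c'_j$. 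The three types of light couple contribute at least $1$ each by a parallel analysis. Lemma~\ref{lem:hatsum} (with $m = 1$) then yields $S_L(P) \geq 2n - 2 + 2h + \ell$, which rearranges to $2h + \ell \leq B + 2$.

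With this estimate in hand, the conclusion drops out of a case analysis on $B$. When $B \geq 4$, no pair is blocked. When $B = 3$, $2h + \ell \leq 5$ forces $h \leq 2$, so at most $\binom{h}{2} \leq 1$ pair is blocked. When $B = 2$, $2h + \ell \leq 4$, and checking each admissible $(h,\ell)$ shows that $h = 0$ blocks nothing, $h = 2$ forces $\ell = 0$ and blocks only one pair, and $h = 1$ blocks exactly $\ell \leq 2$ pairs. Hence at most two simple pairs are ever blocked, with equality only when $B = 2$, $h = 1$, and $\ell = 2$, matching the conclusion of the lemma.

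I expect the main obstacle to be verifying that the contributions from different couples to $\sizeof{A} + \sizeof{\hat{X}_1} + \sizeof{\hat{X}_n}$ really are disjoint, so that they genuinely add. This rests on the indexing convention $c'_j = c_j$ whenever $c_j \in L(u) \cap L(v)$: if $c_i \neq c'_i$ then $c'_i \notin L(u)$, and a parallel remark applies to $L(v)$, which together prevent colours from distinct couples from colliding. Once this bookkeeping is carefully laid out, the lemma follows immediately from the bound on $2h + \ell$.
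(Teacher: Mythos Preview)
Your proposal is correct and follows essentially the same approach as the paper's proof: both rely on Lemma~\ref{lem:trivbound} to bound $S_L(P)$ from below, on Lemma~\ref{lem:hatsum} to constrain the number of heavy and light couples, and then finish with a case analysis on $B = S_L(P) - 2n$. The only difference is organizational: you distill the constraint from Lemma~\ref{lem:hatsum} into the clean inequality $2h + \ell \leq B + 2$ before doing the case analysis, whereas the paper applies the lemma ad hoc within each case (e.g., ``if $P$ has 3 heavy couples then $S_L(P) \geq 2n+4$'', ``if $P$ has 2 heavy couples then $P$ has no light couple''). Your formulation is slightly tidier, and your attention to the disjointness of contributions from different couples (via the indexing convention) makes explicit a point the paper glosses over.
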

\begin{proof}
  Let $P$ be any internal path, and let $n = \sizeof{V(P)}$.  By the $m=1$ case of
  Lemma~\ref{lem:trivbound}, $S_L(P) \geq 2n + 2$.  If $S_L(P) \geq 2n
  + 4$, then $P$ does not block any simple pairs, since for any pair
  $(S,T)$, we have $\dam_{L,P}(S,T) \leq 4$. Hence it suffices to consider
  $S_L(P) \in \{2n+2, 2n+3\}$.

  We first argue that in both cases, $P$ has at most $2$ heavy couples. If
  $c_jc'_j$ is a heavy couple, then by Equation~\eqref{eq:damformula} we have
  \[\sizeof{\hat{X}_1 \cap \{c_j\}} + \sizeof{\hat{X}_n \cap \{c'_j\}} + \sizeof{A \cap \{c_j, c'_j\}} = 2. \] 
  In particular, since $\{c_i, c'_i\} \cap \{c_j, c'_j\} = \emptyset$ whenever $i \neq j$, we see that
  if $P$ has $3$ heavy couples, then $\sizeof{\hat{X}_1} + \sizeof{\hat{X}_n} + \sizeof{A} \geq 6$.
  By the $m=1$ case of Lemma~\ref{lem:hatsum}, this implies that $S_L(P) \geq 2n+4$.

  If $S_L(P) = 2n+3$, then $P$ blocks the simple pair $(S,T)$ only if
  $\dam_{L,P}(S,T) = 4$, i.e., if both couples used in $(S,T)$ are heavy.
  Since $P$ has at most $2$ heavy couples, this implies that $P$ blocks
  at most $1$ simple pair.

  If $S_L(P) = 2n+2$, then $P$ blocks the simple pair $(S,T)$ if and
  only if $\dam_{L,P}(S,T) \geq 3$, i.e., if one of the couples in
  $(S,T)$ is heavy and the other is not safe. Lemma~\ref{lem:hatsum}
  implies that if $P$ has $2$ heavy couples, then $P$ has no light
  couple, since that would imply that $\sizeof{\hat{X}_1} +
  \sizeof{\hat{X}_n} + \sizeof{A} \geq 5$. In particular, if $P$ has
  $2$ heavy couples, then it blocks at most $1$ simple pair. Likewise,
  if $P$ has $1$ heavy couple, then $P$ has at most $2$ light
  couples. The desired conclusion follows.
\end{proof}
Now we specialize to the $\Theta_{r,s,t}$ case.
\begin{corollary}\label{cor:exactblock}
  Let $r,s,t$ be positive integers, and let $L$ be a list
  assignment on $\Theta_{2r,2s,2t}$. If $L$ has no simple solution, then
  each simple pair $(S,T)$ is blocked by exactly one internal
  path. In particular, each couple $c_jc'_j$ is heavy for at most one
  internal path.
\end{corollary}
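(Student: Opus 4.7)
The plan is to combine Lemma~\ref{lem:intblock} with a pigeonhole/double-counting argument based on the fact that in $\Theta_{r,s,t}$ there are exactly three internal paths and (once a pairing of the colours of $L(u)$ and $L(v)$ is fixed) exactly $\binom{4}{2}=6$ simple pairs, one for each choice of two couples.

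First I would observe that since $L$ has no simple solution, every simple pair $(p,q)$ must be blocked by at least one of the three internal paths: otherwise, $(p,q)$ would itself be a simple solution by definition. By Lemma~\ref{lem:intblock}, each internal path blocks at most $2$ simple pairs, so across the three internal paths there are at most $6$ ``blockings'' in total. Since each of the $6$ simple pairs accounts for at least one blocking, equality must hold throughout: each simple pair is blocked by \emph{exactly} one internal path, and each internal path blocks \emph{exactly} $2$ simple pairs. This proves the first statement of the corollary, and also (via the equality case of Lemma~\ref{lem:intblock}) records the structural fact that each internal path $P$ satisfies $S_L(P)=2\sizeof{V(P)}+2$ and has exactly one heavy couple together with two light couples.

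For the ``in particular'' clause, I would argue by contradiction: suppose some couple $c_jc'_j$ is heavy for two distinct internal paths $P_1$ and $P_2$. Examining the proof of Lemma~\ref{lem:intblock} in the case $S_L(P)=2n+2$, the two simple pairs blocked by a path are precisely those formed by combining its unique heavy couple with each of its two light couples. Hence both simple pairs blocked by $P_1$ contain the couple $c_jc'_j$, and likewise for $P_2$. This would produce at least $4$ blocked simple pairs (counted with multiplicity) containing $c_jc'_j$, but only $3$ simple pairs contain a given couple (pairing it with one of the remaining three couples). By pigeonhole, some pair containing $c_jc'_j$ is blocked by both $P_1$ and $P_2$, contradicting the conclusion of the previous paragraph that every simple pair is blocked by exactly one internal path.

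No step looks technically hard; the only thing to be careful about is correctly using the $S_L(P)=2n+2$ case of Lemma~\ref{lem:intblock} to identify which two pairs a ``doubly-blocking'' path blocks, so that the final pigeonhole with the number $3$ of pairs containing a fixed couple goes through cleanly.
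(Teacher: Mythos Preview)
Your proof is correct and follows essentially the same approach as the paper. The first part is identical (six simple pairs, three paths each blocking at most two); for the second part, the paper observes that the two paths with heavy couple $c_jc'_j$ must share a light couple $c_kc'_k$ (two light couples each among the three remaining couples), whereas you count the four blockings of pairs containing $c_jc'_j$ against the three such pairs---the same pigeonhole, just applied at a slightly different point.
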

\begin{proof}
  There are $6$ simple pairs, and each of the three internal paths
  blocks at most $2$ of them; this proves the first part. If the
  couple $c_jc'_j$ is heavy for two different internal paths $P$ and
  $Q$, then since $P$ and $Q$ each have two light couples, there is
  some couple $c_kc'_k$ that is light for both $P$ and $Q$. Now the
  pair $(\{c_j,c_k\}, \{c'_j, c'_k\})$ is blocked by both $P$ and $Q$,
  contradicting the first part of the corollary.
\end{proof}
We now must handle the case where $L$ has no simple solution. First we
refine our notation.  By Corollary~\ref{cor:exactblock}, we may
reindex $L(u)$ and $L(v)$ so that for all $j \in \{0,1,2\}$, the
couple $c_jc'_j$ is heavy for $P^j$. (By simultaneously permuting the
labels in $L(u)$ and $L(v)$, this maintains the original property that
$c'_j = c_j$ whenever $c_j \in L(u) \cap L(v)$.) With this new
notation, we have the following further consequence of
Corollary~\ref{cor:exactblock}:
\begin{corollary}\label{cor:helix}
  Let $r,s,t$ be positive integers, and let $L$ be a list
  assignment on $\Theta_{2r,2s,2t}$. If $L$ has no simple solution, then
  $c_3c'_3$ is light for all internal paths $P^i$, and one of the two
  following situations must hold:
  \begin{enumerate}[(a)]
  \item $c_1c'_1$ is light for $P^0$, $c_2c'_2$ is light for $P^1$, and $c_0c'_0$ is light for $P^2$, or
  \item $c_2c'_2$ is light for $P^0$, $c_0c'_0$ is light for $P^1$, and $c_1c'_1$ is light for $P^2$.
  \end{enumerate}
\end{corollary}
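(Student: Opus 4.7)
The plan is to exploit the scarcity of internal paths relative to simple pairs, together with the constraint from Corollary~\ref{cor:exactblock} that every simple pair is blocked by exactly one internal path. Since $\Theta_{2,2r,2s}$ has three internal paths and there are six simple pairs, while Lemma~\ref{lem:intblock} bounds the number of simple pairs blocked by any single path by two, each $P^j$ must block exactly two simple pairs. By the characterization in Lemma~\ref{lem:intblock}, this means each $P^j$ has exactly one heavy couple (forced to be $c_jc'_j$ by our reindexing) and exactly two light couples, with the remaining couple being safe.

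The first claim, that $c_3c'_3$ is light for every $P^i$, I would get by isolating the simple pairs $(\{c_j, c_3\}, \{c'_j, c'_3\})$ for $j \in \{0,1,2\}$. Such a pair can only be blocked by a path on which one of its two couples is heavy; since $c_3c'_3$ is heavy for none of the $P^i$, the only candidate is $P^j$. Because the pair must be blocked by exactly one path, $P^j$ must actually block it, which forces $c_3c'_3$ to be light for $P^j$. Running this for $j=0,1,2$ gives the conclusion.

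For the second claim, I would use the two light couples for each $P^j$: one is $c_3c'_3$, and the other must be $c_{\sigma(j)}c'_{\sigma(j)}$ for a uniquely determined $\sigma(j) \in \{0,1,2\} \setminus \{j\}$. The simple pair $(\{c_j, c_{\sigma(j)}\}, \{c'_j, c'_{\sigma(j)}\})$ is blocked by $P^j$, and by uniqueness of blocking it is not blocked by $P^{\sigma(j)}$, which means $c_jc'_j$ is not light for $P^{\sigma(j)}$, i.e., $\sigma(\sigma(j)) \neq j$. So $\sigma$ is a fixed-point-free function on $\{0,1,2\}$ with no element of order two; a small case check (if $\sigma(0)=1$ then $\sigma(1) \neq 0$ forces $\sigma(1)=2$, and $\sigma(2) \neq 1$ forces $\sigma(2)=0$, and symmetrically for $\sigma(0)=2$) shows the only two such functions are the two three-cycles, which correspond precisely to cases~(a) and~(b).

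The whole argument is combinatorial bookkeeping, so there is no serious obstacle beyond tracking which paths can block which pairs. The only subtlety is verifying that the two cyclic permutations on three elements are indeed the only fixed-point-free functions on $\{0,1,2\}$ without elements of order two, which is a direct finite case analysis.
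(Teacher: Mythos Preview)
Your argument is correct and is precisely the kind of bookkeeping the authors have in mind: the paper omits the proof entirely as ``straightforward but tedious,'' and your counting argument---forcing each $P^j$ to block exactly two simple pairs, hence to have one heavy, two light, and one safe couple, then pinning down the light couples via uniqueness of blocking---is the natural way to carry it out. The only cosmetic point is that calling $\sigma$ a function ``with no element of order two'' is slightly loose phrasing for the condition $\sigma(\sigma(j)) \neq j$, but your explicit case check makes the intended meaning and conclusion clear.
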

\begin{proof}
  As in Corollary~\ref{cor:exactblock}, since there is no simple
  solution, each internal path blocks $2$ simple pairs. Thus, by
  Lemma~\ref{lem:intblock}, each internal path has one heavy couple
  and two light couples, and therefore has exactly one safe
  couple. For each $j \in \{0,1,2\}$, let $\pi(j)$ be the unique index
  in $\{0,1,2,3\}$ such that $c_{\pi(j)}c'_{\pi(j)}$ is safe for $P^j$. We
  will show that $\pi$ is a permutation of $\{0,1,2\}$ having no fixed
  points. It is clear that $\pi(j) \neq j$ for all $j \in \{0,1,2\}$,
  since $c_jc'_j$ is heavy for $P^j$.

  First we argue that $\pi$ is an injection. Suppose that $\pi(i) =
  \pi(j)$ for some $i \neq j$.  Since $c_jc'_j$ is heavy for $P^j$ and
  since $P^j$ has one heavy couple and two light couples, it follows
  that $c_ic'_i$ is light for $P^j$. Likewise, $c_jc'_j$ is light for
  $P^i$.  By Lemma~\ref{lem:intblock}, we have $S_L(P^i) -
  2\sizeof{V(P^i)} = S_L(P^j) - 2\sizeof{V(P^j)} = 2$, so the simple
  pair $(\{c_i,c_j\}, \{c'_i,c'_j\})$ is blocked by both $P^i$ and
  $P^j$, contradicting Corollary~\ref{cor:exactblock}.

  Next we argue that $\pi(j) \neq 3$ for all $j$. If $\pi(j) = 3$,
  then for both $i \in \{0,1,2\}-j$, the couple $c_ic'_i$ is light for
  $P^j$. Since $\pi$ is an injection, there is some $i \in
  \{0,1,2\}-j$ with $\pi(i) \neq j$, so that $c_jc'_j$ is light for
  $P^i$. Now the simple pair $(\{c_i,c_j\}, \{c'_i,c'_j\})$ is blocked
  by both $P^i$ and $P^j$, again contradicting
  Corollary~\ref{cor:exactblock}.

  Thus $\pi$ is a permutation of $\{0,1,2\}$ with no fixed points.
  This implies that $\pi$ is a $3$-cycle. If $\pi = (0\ 2\ 1)$
  then situation~(a) holds, and if $\pi = (0\ 1\ 2)$ then situation~(b)
  holds.
\end{proof}
\begin{corollary}
  If $r,s$ are positive integers, then $\Theta_{2,2r,2s}$ is $\ab{4}{2}$-choosable.
\end{corollary}
\begin{proof}
  Let $G = \Theta_{2, 2r, 2s}$, and let $L$ be any list assignment on
  $G$.  We must show that $G$ is $\ab{L}{2}$-colourable. If $L$ has a
  simple solution, then there is nothing more to show, so we may
  assume that $L$ does not have a simple solution. Let $P^0$, $P^1$,
  and $P^2$ be the internal paths of $G$, with $\sizeof{V(P^0)} =
  1$. We may choose the indexing of $P^1$ and $P^2$ so that
  situation~(a) of Corollary~\ref{cor:helix} holds (this is the case,
  for example, in Figure~\ref{fig:theta244}). For each $i \in
  \{0,1,2\}$, we write $\hat{X}_1^{i}$, $\hat{X}_n^{i}$, and $A^{i}$
  to refer to the sets $\hat{X}_1$, $\hat{X}_n$, and $A$ calculated
  for $P^i$.

  Since $\sizeof{V(P^0)} = 1$, we know that
  $\hat{X}_1^0 = \hat{X}_n^0 = \emptyset$. Hence, since $c_0c'_0$
  is heavy for $P_0$, we must have $c_0 \neq c'_0$.  Hence $c_0 \notin
  L(u) \cap L(v)$ and $c'_0 \notin L(u) \cap L(v)$.

  Now consider $P^2$. Since $c_0 \neq c'_0$ and $c_0c'_0$ is light for
  $P^2$, we must have either $c_0 \notin A^2 \cup \hat{X}_1^2$ or $c'_0 \notin
  A^2 \cup \hat{X}_n^2$. By symmetry, we may assume that $c_0 \notin A^2 \cup \hat{X}_1^2$. Let
  $S = \{c_0, c_3\}$ and let $T = \{c'_2, c'_3\}$.

  We check that $\dam_{L,P}(S,T) \leq 2$ for each internal path
  $P$. By Equation~\eqref{eq:damformula}, for each $i$ we have
  \begin{align*}
    \dam_{L,P^i}(S,T) &= \sizeof{(A^{i} \cup \hat{X}_1^{i}) \cap S} +
    \sizeof{(A^{i} \cup \hat{X}_n^{i}) \cap T} - \sizeof{A^{i} \cap S \cap T} \\
    &\leq \sizeof{(A^{i} \cup \hat{X}_1^{i}) \cap \{c_0\}} + \sizeof{(A^{i} \cup \hat{X}_n^{i}) \cap \{c'_2\}} + \\
    &\qquad \sizeof{(A^{i} \cup \hat{X}_1^{i}) \cap \{c_3\}} + \sizeof{(A^{i} \cup \hat{X}_n^{i}) \cap \{c'_3\}} -
    \sizeof{A^{i} \cap \{c_3\} \cap \{c'_3\}} \\
    &= \sizeof{(A^{i} \cup \hat{X}_1^{i}) \cap \{c_0\}} + \sizeof{(A^{i} \cup \hat{X}_n^{i}) \cap \{c'_2\}} + \dam_{L,P^i}(\{c_3\}, \{c'_3\}).
  \end{align*}
  Since the couple $c_3c'_3$ is light for all internal paths,
  we have $\dam_{L,P^i}(\{c_3\}, \{c'_3\}) = 1$ for all $P^i$, so that
  \[ \dam_{L,P^i}(S,T) \leq \sizeof{(A^{i} \cup \hat{X}_1^{i}) \cap
    \{c_0\}} + \sizeof{(A^{i} \cup \hat{X}_n^{i}) \cap \{c'_2\}} +
  1. \]
  Each term of this sum is clearly at most $1$, so to show that
  $\dam_{L, P^i}(S,T) \leq 2$, it suffices to show that one of the
  terms is $0$ for each $i$.

  Since $c_2c'_2$ is safe for $P^0$, we have $c'_2 \notin A^0 \cup
  \hat{X}^0_n$, so $\dam_{L, P^0}(S,T) \leq 2$. Likewise, since
  $c_0c'_0$ is safe for $P^1$, we have $c_0 \notin A^1 \cup
  \hat{X}_1^1$, so $\dam_{L, P^1}(S,T) \leq 2$. By assumption, $c_0
  \notin A^2 \cup \hat{X}_1^2$, so we also have $\dam_{L, P^2}(S,T)
  \leq 2$.
\end{proof}

\section{Even Cycles Sharing a Vertex or Joined by a Path}\label{sec:evencycle}
In this section, we show that if $G$ consists of two cycles sharing a
single vertex or two vertex-disjoint cycles joined by a path, then $G$
is $\ab{4}{2}$-choosable.  In fact, one can show that these graphs are
$\ab{4m}{2m}$-choosable for all $m$; in the interest of brevity, we prove
only the $\ab{4}{2}$-choosability case, which allows us to reuse some
tools from the previous section. As before, whenever $L$ is a list assignment,
we tacitly assume $\sizeof{L(v)} = 4$ for all $v \in V(G)$.
\begin{definition}
  Let $P$ be a path with an odd number of vertices, let $L$ be a list
  assignment on $P$, and let $W$ be a set of $4$ colours. An
  \emph{$L$-bad $W$-set} for $P$ is a set $S \subset W$ of $2$ colours
  such that $\dam_{L,P}(S,S) > S_L(P) - 2\sizeof{V(P)}$. When $L$ is
  understood, we abbreviate ``$L$-bad $W$-set'' to ``bad $W$-set''.
\end{definition}
\begin{lemma}\label{lem:twobad}
  If $P$ is a path with an odd number of vertices, $L$ is a list
  assignment on $P$, and $W$ is any set of $4$ colours, then $P$ has
  at most $2$ $L$-bad $W$-sets.
\end{lemma}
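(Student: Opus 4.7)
The plan is to invoke the $m = 1$ specialization of Lemma~\ref{lem:oddhat} and show that the damage $\dam_{L,P}(p, p)$ is additive in the elements of $p$, reducing the lemma to a finite combinatorial count. Write $n = \sizeof{V(P)}$ and $s = S_L(P) - 2n$; by Lemma~\ref{lem:trivbound}, $s \geq 2$. For each colour $c$ I set
\[ d(c) = \bigsizeof{(A \cup \hat{X}_1) \cap \{c\}} + \bigsizeof{(A \cup \hat{X}_n) \cap \{c\}} - \bigsizeof{A \cap \{c\}}. \]
Since $A$ is disjoint from both $\hat{X}_1$ and $\hat{X}_n$, a routine check yields $d(c) \in \{0, 1, 2\}$, with $d(c) = 2$ iff $c \in \hat{X}_1 \cap \hat{X}_n$, $d(c) = 1$ iff $c$ lies in exactly one of $A$, $\hat{X}_1$, $\hat{X}_n$, and $d(c) = 0$ otherwise.

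Applying Lemma~\ref{lem:oddhat} with $p = q = \{a, b\}$ and splitting each cardinality as a sum of indicators over $a$ and $b$ gives the additivity identity
\[ \dam_{L,P}(\{a, b\}, \{a, b\}) = d(a) + d(b), \]
so $\{a, b\}$ is a bad $W$-set exactly when $d(a) + d(b) > s$. Because $d(c) \leq 2$, no $W$-set is bad when $s \geq 4$, so we may restrict to $s \in \{2, 3\}$. Moreover each colour contributes exactly $d(c)$ to the quantity $\sizeof{\hat{X}_1} + \sizeof{\hat{X}_n} + \sizeof{A}$, so Lemma~\ref{lem:hatsum} yields the global budget
\[ \sum_{c \in W} d(c) \leq \sizeof{\hat{X}_1} + \sizeof{\hat{X}_n} + \sizeof{A} \leq s + 2. \]

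What remains is purely combinatorial: among four integers in $\{0, 1, 2\}$ whose total is at most $s + 2$, at most two unordered pairs sum to more than $s$. For $s = 3$ (budget $5$), a direct enumeration of profiles shows that at most one pair sums above $3$ (the extremal case being $(2,2,1,0)$). For $s = 2$ (budget $4$), the extremal case is the profile $(2, 1, 1, 0)$, which produces exactly two pairs summing to $3$; every other admissible profile yields no more than two bad pairs. I expect the only real subtlety to be the clean derivation of the additivity identity from Lemma~\ref{lem:oddhat}; once that is in place, the case check is mechanical.
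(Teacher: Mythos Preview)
Your argument is correct. The additivity identity follows cleanly from Lemma~\ref{lem:oddhat} since the three cardinalities there split over the elements of $p=q=\{a,b\}$, and the budget $\sum_{c\in W}d(c)\le s+2$ is exactly the $m=1$ case of Lemma~\ref{lem:hatsum}. Your case check for $s\in\{2,3\}$ is also accurate.

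The paper takes a shorter, more indirect route: it attaches two new endpoints $u,v$ to $P$ with $L(u)=L(v)=W$, so that a bad $W$-set is precisely a simple pair $(p,p)$ blocked by $P$, and then invokes Lemma~\ref{lem:intblock} to conclude at most two such pairs exist. In substance the two arguments coincide: with $L(u)=L(v)=W$ every couple is of the form $(c,c)$, your $d(c)$ is exactly the couple damage $\dam(\{c\},\{c\})$, and your budget inequality is the same use of Lemma~\ref{lem:hatsum} that drives the proof of Lemma~\ref{lem:intblock}. What you gain is a self-contained derivation that does not pass through the theta-graph machinery of Section~\ref{sec:goodtheta}; what the paper gains is brevity, since all of your case analysis is already absorbed into Lemma~\ref{lem:intblock}.
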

\begin{proof}
  Consider the graph $H$ obtained by adding new vertices $u$ and $v$
  on the ends of $P$, and extend $L$ to $V(H)$ by putting $L(u) = L(v)
  = W$. Considering $H$ as a theta graph with $P$ as its only internal
  path (as in Section~\ref{sec:goodtheta}), we see that $S$ is a bad
  set for $P$ if and only if $P$ blocks the simple pair $(S,S)$. By
  Lemma~\ref{lem:intblock}, it follows that $P$ has at most $2$ bad
  sets.
\end{proof}
\begin{lemma}\label{lem:inject}
  Let $Q$ be a path with endpoints $u$ and $v$. For every list
  assignment $L$ on $Q$, there is an injective function $h : {L(u)
    \choose 2} \to {L(v) \choose 2}$ such that for all $S \in {L(u)
    \choose 2}$, the precolouring $\phi(u) = S$, $\phi(v) = h(S)$
  extends to all of $Q$.
\end{lemma}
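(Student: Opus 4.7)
The plan is to recast the statement as a bipartite matching problem: let $B_P$ be the bipartite graph with parts $\binom{L(u)}{2}$ and $\binom{L(v)}{2}$ and an edge $p \sim q$ whenever the precolouring $\phi(u) = p$, $\phi(v) = q$ extends to all of $P$. Both parts have size $\binom{4}{2} = 6$, so the desired injection $h$ is exactly a perfect matching in $B_P$. I would prove the existence of such a matching by induction on $|V(P)|$.

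For the inductive step with $|V(P)| \geq 3$, let $w$ be the neighbor of $u$ on $P$ and set $P' = P - u$, a shorter path with endpoints $w$ and $v$. The inductive hypothesis applied to $P'$ gives an injection $h' : \binom{L(w)}{2} \to \binom{L(v)}{2}$ such that each $(p', h'(p'))$ extends to $P'$. The base case applied to the edge $uw$ gives an injection $g : \binom{L(u)}{2} \to \binom{L(w)}{2}$ with $g(p) \cap p = \emptyset$ for every $p$. The composition $h = h' \circ g$ is then an injection, and for each $p$ the assignment $\phi(u) = p$, $\phi(w) = g(p)$, $\phi(v) = h(p)$ is a valid partial colouring (proper on the edge $uw$ because $p \cap g(p) = \emptyset$) that extends to all of $P'$ by the inductive hypothesis, and hence to all of $P$.

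The base case $|V(P)| = 2$ is the core of the argument. Here $P = uv$ is a single edge, so the precolouring $(p,q)$ extends iff $p \cap q = \emptyset$, and the problem reduces to finding a perfect matching in the bipartite graph $H$ on $\binom{L(u)}{2}$ and $\binom{L(v)}{2}$ with edges representing disjointness. By Hall's theorem it suffices to show that for every $S \subseteq \binom{L(u)}{2}$, at most $6 - |S|$ pairs $q \in \binom{L(v)}{2}$ meet every $p \in S$. I would verify this by case analysis on $k = |L(u) \cap L(v)|$: when $k = 0$ the graph $H$ is complete bipartite; when $k = 4$, each $p$ has the unique neighbor $L(v) \setminus p$ and $p \mapsto L(v) \setminus p$ is a bijection; and for $k \in \{1,2,3\}$, the degree of each $p$ in $H$ depends only on $|p \cap L(v)| \in \{0,1,2\}$ (giving degrees $6$, $3$, $1$ respectively), so Hall's condition reduces to a short count. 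The main obstacle is organising this verification cleanly across the three intermediate values of $k$, but the cases are routine once the degree structure is fixed.
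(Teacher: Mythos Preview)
Your approach is essentially the same as the paper's: induct on $|V(P)|$, peel off one end vertex, and compose the two injections obtained from the shorter path and the single edge. The only difference is in the base case $|V(P)|=2$: where you propose verifying Hall's condition by case analysis on $|L(u)\cap L(v)|$, the paper simply asserts that such an $h$ is easy to construct---and indeed, if one indexes $L(u)=\{c_0,c_1,c_2,c_3\}$ and $L(v)=\{c'_0,c'_1,c'_2,c'_3\}$ so that $c_j=c'_j$ whenever $c_j\in L(u)\cap L(v)$, then the map $\{c_i,c_j\}\mapsto\{c'_k,c'_\ell\}$ with $\{i,j,k,\ell\}=\{0,1,2,3\}$ is an explicit bijection satisfying $p\cap h(p)=\emptyset$, avoiding the case analysis entirely.
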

\begin{proof}
  We use induction on $\sizeof{V(Q)}$. When $\sizeof{V(Q)} = 1$
  or $\sizeof{V(Q)} = 2$, the claim clearly holds: when
  $\sizeof{V(Q)} = 1$ we may take $h$ to be the identity
  function, and when $\sizeof{V(Q)} = 2$ it suffices that
  $S \cap h(S) = \emptyset$ for all $S$; such an $h$
  is easy to construct.

  Hence we may assume that $\sizeof{V(Q)} > 2$ and the
  claim holds for smaller paths. Let $v'$ be the unique
  neighbor of $v$. We split $Q$ into the $u,v'$-subpath
  $Q_1$ and the $v',v$-subpath $Q_2$, overlapping only at $v'$.
  Let $h_1$ and $h_2$ be the functions for $Q_1$ and $Q_2$
  respectively, as guaranteed by the induction hypothesis.
  Composing $h_2$ and $h_1$, we see that $h_2 \after h_1$
  has the desired properties.
\end{proof}
We handle ``two cycles sharing a vertex'' as a special case of
``two cycles joined by a path'', considering the shared vertex
as a path on $1$ vertex.
\begin{corollary}
  If $G$ is a graph consisting of two even cycles joined
  by a (possibly-trivial) path, then $G$ is $\ab{4}{2}$-choosable.
\end{corollary}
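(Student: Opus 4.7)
The plan is to apply Lemma~\ref{lem:inject} to the connecting path $P$ and Lemma~\ref{lem:twobad} to each even cycle, then combine them by a pigeonhole count on the four-element list $L(u)$. Let $u \in V(C_1)$ and $v \in V(C_2)$ be the endpoints of $P$; when $P$ is trivial we set $u = v$ and regard $P$ as the one-vertex path, in which case Lemma~\ref{lem:inject} returns the identity injection. Fix any list assignment $L$ with $\sizeof{L(x)} = 4$ for every vertex $x$.

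First, apply Lemma~\ref{lem:inject} to $P$ to obtain an injection $h \colon \binom{L(u)}{2} \to \binom{L(v)}{2}$ such that the precolouring $\phi(u) = p$, $\phi(v) = h(p)$ always extends to a $2$-tuple $L$-colouring of $P$. Next, $C_1 - u$ is a path on an odd number of vertices since $\sizeof{V(C_1)}$ is even; colouring $u$ with a pair $p$ forces both endpoints of $C_1 - u$ (the two neighbors of $u$ in $C_1$) to use lists reduced by $p$, so by Lemma~\ref{lem:xuding} the colouring extends to $C_1$ if and only if $\dam_{L, C_1 - u}(p, p) \leq S_L(C_1 - u) - 2\sizeof{V(C_1 - u)}$, that is, if and only if $p$ is not a bad $L(u)$-set of $C_1 - u$. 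By Lemma~\ref{lem:twobad} there are at most two such bad sets, and analogously at most two bad $L(v)$-sets of $C_2 - v$.

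To finish, of the six pairs in $\binom{L(u)}{2}$, at most two are bad for $C_1 - u$ and, by the injectivity of $h$, at most two others have $h(p)$ bad for $C_2 - v$, leaving at least two pairs that are safe on both sides. For any such $p$, set $\phi(u) = p$ and $\phi(v) = h(p)$: the precolouring extends to $P$ by Lemma~\ref{lem:inject}, to $C_1 - u$ because $p$ is not bad, and to $C_2 - v$ because $h(p)$ is not bad. These three extensions live on pairwise disjoint vertex sets, agreeing only at $u$ and $v$ by construction, so they combine into a valid $\ab{L}{2}$-colouring of $G$.

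The main point that requires care is justifying that the bad-set notion from the theta-graph context of the previous section really captures the obstruction to extending a colouring of $u$ across the cycle $C_1$. As outlined above, this is immediate from Lemma~\ref{lem:xuding} applied to the odd path $C_1 - u$ under the list assignment $\Ltaketwo{p}{p}$, together with the identity $S_{\Ltaketwo{p}{p}}(C_1 - u) = S_L(C_1 - u) - \dam_{L, C_1 - u}(p, p)$.
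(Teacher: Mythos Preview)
Your proof is correct and follows essentially the same approach as the paper: decompose $G$ into the two odd paths $C_1-u$, $C_2-v$ and the connecting path, use Lemma~\ref{lem:twobad} to bound the number of bad pairs on each cycle-path, use Lemma~\ref{lem:inject} to thread the connecting path, and finish with the same $6-2-2$ pigeonhole count. Your write-up is in fact slightly more careful than the paper's in spelling out why ``$p$ is not a bad $L(u)$-set for $C_1-u$'' is exactly the condition for the precolouring of $u$ to extend across $C_1$.
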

\begin{proof}
  Let $C$ and $D$ be the cycles in $G$, and let $u \in V(C)$ and $v
  \in V(D)$ be the endpoints of the path joining $C$ and $D$.  Let $P
  = C-u$, let $R = D - v$, and let $Q$ be the path joining $u$ and
  $v$, so that $P, Q, R$ are disjoint paths with $V(P) \cup V(Q) \cup
  V(R) = V(G)$. The situation is illustrated in Figure~\ref{fig:cyclepath}.
  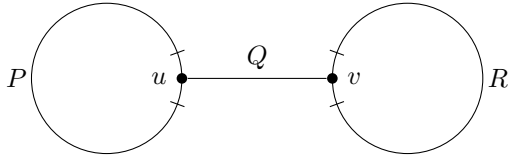
\begin{figure}
    \centering
    \begin{tikzpicture}
      \begin{scope}
        \draw (0cm,0cm) circle (1cm);
        \draw (20:.9cm) -- (20:1.1cm);
        \draw (-20:.9cm) -- (-20:1.1cm);
      \end{scope}
      \begin{scope}[xshift=4cm, xscale=-1]
        \draw (0cm,0cm) circle (1cm);
        \draw (20:.9cm) -- (20:1.1cm);
        \draw (-20:.9cm) -- (-20:1.1cm);
      \end{scope}
      \lpoint{u} (u) at (1cm,0cm) {};
      \rpoint{v} (v) at (3cm,0cm) {};
      \draw (u) -- (v) node[above, pos=0.5] {$Q$};
      \node at (-1.2cm, 0cm) {$P$};
      \node at (5.2cm, 0cm) {$R$};
    \end{tikzpicture}
    \caption{Decomposing $G$ into $P,Q,R$.}
    \label{fig:cyclepath}
  \end{figure}
  By Lemma~\ref{lem:twobad}, the path $P$ has at most two bad
  $L(u)$-sets, and the path $R$ has at most two bad $L(v)$-sets.
  Let $h : {L(u) \choose 2} \to {L(v) \choose 2}$ be the injection
  guaranteed by Lemma~\ref{lem:inject}. Since there are $6$ ways to
  choose a set $S \in {L(u) \choose 2}$, we see that there is some $S$
  such that $S$ is not bad for $P$ and $h(S)$ is not bad for $Q$. It
  follows that we may extend the precolouring $\phi(u) = S$, $\phi(v) = h(S)$
  to all of $P$, $Q$, and $R$.
\end{proof}

Tuza and Voigt have already shown~\cite{tuzavoigt-k24} that $K_{2,4}$
is $\ab{4m}{2m}$-choosable for all $m$, so this completes the positive
direction    of Theorem~\ref{thm:main}.
\section{Non-$\ab{4}{2}$-Choosable Theta Graphs}\label{sec:ctx}
In this section, we argue that if $\min\{r,s,t\} \geq 3$, then
$\Theta_{r,s,t}$ is not $\ab{4}{2}$-choosable, and that if $t \geq 2$,
then $\Theta_{2,2,2,2t}$ is not
$\ab{4}{2}$-choosable. Figure~\ref{fig:ctx} shows noncolourable list
assignments for $\Theta_{2,2,2,4}$ and $\Theta_{3,3,3}$.
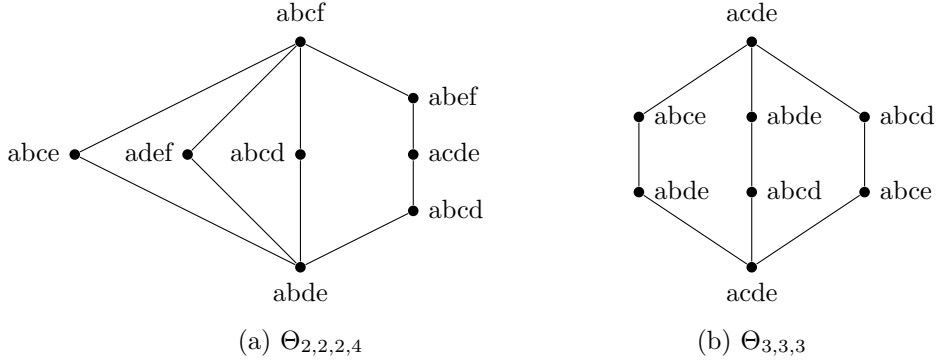
\begin{figure}
  \centering
  \begin{tikzpicture}
    \node at (-3cm, -4cm) {(a) $\Theta_{2,2,2,4}$};
    \node at (3cm, -4cm) {(b) $\Theta_{3,3,3}$};
    \begin{scope}[xshift=-3cm, yscale=1.5, xscale=1.5]
      \anonlist{above}{abcf} (u) at (0cm, 0cm) {};
      \anonlist{left}{abce} (w1) at (-2cm, -1cm) {};
      \anonlist{left}{adef} (w2) at (-1cm, -1cm) {};
      \anonlist{left}{abcd} (w3) at (0cm, -1cm) {};
      \anonlist{right}{abef} (v1) at (1cm, -.5cm) {};
      \anonlist{right}{acde} (v2) at (1cm, -1cm) {};
      \anonlist{right}{abcd} (v3) at (1cm, -1.5cm) {};
      \anonlist{below}{abde} (z) at (0cm, -2cm) {};
      \draw (u) -- (w1) -- (z);
      \draw (u) -- (w2) -- (z);
      \draw (u) -- (w3) -- (z);
      \draw (u) -- (v1) -- (v2) -- (v3) -- (z);
    \end{scope}
    \begin{scope}[xshift=3cm, yscale=1, xscale=1.5]
      \anonlist{above}{acde} (u) at (0cm, 0cm) {};
      \anonlist{below}{acde} (v) at (0cm, -3cm) {};
      \anonlist{right}{abce} (x1) at (-1cm, -1cm) {};
      \anonlist{right}{abde} (x2) at (-1cm, -2cm) {};
      \anonlist{right}{abde} (y1) at (0cm, -1cm) {};
      \anonlist{right}{abcd} (y2) at (0cm, -2cm) {};
      \anonlist{right}{abcd} (z1) at (1cm, -1cm) {};
      \anonlist{right}{abce} (z2) at (1cm, -2cm) {};
      \draw (u) -- (x1) -- (x2) -- (v);
      \draw (u) -- (y1) -- (y2) -- (v);
      \draw (u) -- (z1) -- (z2) -- (v);
    \end{scope}
  \end{tikzpicture}\\
  \caption{Noncolourable list assignments for $\Theta_{2,2,2,4}$ and $\Theta_{3,3,3}$.}
  \label{fig:ctx}
\end{figure}

To show that larger theta graphs are not $\ab{4}{2}$-choosable, we
again apply Lemma~\ref{lem:rubin}. In particular, the contrapositive
of Lemma~\ref{lem:rubin} states that if $G'$ is not
$\ab{4}{2}$-choosable, then $G$ is not $\ab{4}{2}$ choosable either.
Hence $\Theta_{4,4,4}$ is not $\ab{4}{2}$-choosable, since
$\Theta_{3,3,3}$ is obtained from $\Theta_{4,4,4}$ by applying this
reduction to a vertex of degree $3$.

Likewise, $\Theta_{2,2,2,2t}$ is obtained from $\Theta_{2,2,2,2t+2}$
by applying this reduction to an interior vertex of the   path of length $2t+1$; hence, since
$\Theta_{2,2,2,4}$ is not $\ab{4}{2}$-choosable, it follows by
induction on $t$ that when $t \geq 2$, the graph $\Theta_{2,2,2,2t}$
is not $\ab{4}{2}$-choosable.  Similarly, since $\Theta_{3,3,3}$ is
not $\ab{4}{2}$-choosable, no graph of the form $\Theta_{2r+1, 2s+1,
  2t+1}$ for $r,s,t \geq 1$ is $\ab{4}{2}$-choosable, and since
$\Theta_{4,4,4}$ is not $\ab{4}{2}$-choosable, no graph of the form
$\Theta_{2r,2s,2t}$ for $r,s,t \geq 2$ is $\ab{4}{2}$-choosable.

\section{A Conjecture of Voigt}\label{sec:voigt}
Voigt~\cite{voigt} conjectured that every bipartite
$3$-choosable-critical graph is $\ab{4m}{2m}$-choosable for all
$m$. We have seen that this conjecture fails for $m=1$: there are
bipartite $3$-choosable-critical graphs which are not
$\ab{4}{2}$-choosable. In this section, we prove the following weaker
version of Voigt's conjecture:
\begin{theorem}\label{thm:weakvoigt}
  There is a fixed integer $k$ such that for every positive integer $m$,
  every bipartite $3$-choosable-critical graph is
  $\ab{4km}{2km}$-choosable.
\end{theorem}
Our proof is based on the following theorem of Alon, Tuza, and Voigt~\cite{atv}.
\begin{theorem}[Alon--Tuza--Voigt~\cite{atv}]\label{thm:atv}
  For every integer $n$ there exists a number $f(n) \leq (n+1)^{2n+2}$
  such that the following holds. For every graph $G$ with $n$ vertices
  and with fractional chromatic number $\chi^*$, and for every integer
  $M$ which is divisible by all integers from $1$ to $f(n)$, $G$ is
  $\ab{M}{M/\chi^*}$-choosable.
\end{theorem}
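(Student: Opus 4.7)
The plan is to combine a linear-programming analysis of $\chi^*(G)$ with a probabilistic list-colouring construction. The two main ingredients are: (a) extracting a rational representation $\chi^*(G) = p/q$ whose numerator and denominator are bounded by an explicit function of $n$, which determines $f(n)$; and (b) lifting a $\ab{p}{q}$-colouring of $G$ to an $\ab{M}{Mq/p}$-list colouring whenever $M$ satisfies the divisibility hypothesis.

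For ingredient (a), $\chi^*(G)$ is the optimum of the linear program that minimises $\sum_I x_I$ over nonnegative $x_I$ indexed by independent sets $I$ of $G$, subject to $\sum_{I \ni v} x_I \geq 1$ for each $v$. An optimal basic feasible solution is supported on at most $n$ variables, and Cramer's rule writes both its coordinates and the optimum value as ratios of $n \times n$ subdeterminants of a $\{0,1\}$-valued matrix; Hadamard's inequality bounds each such subdeterminant by $n^{n/2}$ in absolute value, and combined with the crude estimate $\chi^*(G) \leq n$ this yields $\chi^*(G) = p/q$ with $\max(p, q) \leq (n+1)^{n+1}$. Taking $f(n) = (n+1)^{2n+2}$ therefore forces both $p \mid M$ and $q \mid M$ as soon as $M$ is divisible by every integer from $1$ to $f(n)$, so in particular $Mq/p$ is a positive integer. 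The LP solution also supplies independent sets $I_1, \ldots, I_p$ such that $q_v := \sizeof{\{i : v \in I_i\}} \geq q$ for every $v \in V(G)$.

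For ingredient (b), let $U = \bigcup_v L(v)$ and consider labellings $\sigma \colon U \to [p]$; define $f(v) = \{c \in L(v) : v \in I_{\sigma(c)}\}$. Properness of $f$ is automatic, since $uv \in E(G)$ together with $c \in f(u) \cap f(v)$ would force $u, v \in I_{\sigma(c)}$, violating independence of $I_{\sigma(c)}$. Under a uniformly random $\sigma$, $\sizeof{f(v)}$ is binomially distributed with mean $Mq_v/p \geq Mq/p$; this already gives the right answer in expectation, with strict expected slack $M(q_v - q)/p \geq M/p$ whenever $q_v > q$.

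The main obstacle is the tight case $q_v = q$, where expectation matches the target exactly and mere concentration cannot push $\sizeof{f(v)}$ above $Mq/p$. My proposed fix is to choose $\sigma$ not uniformly at random but in a structured way: because $p \mid M$ (indeed $M$ is divisible by every integer up to $f(n)$), one would like $\sigma \colon U \to [p]$ such that, for every vertex $v$, the restriction $\sigma|_{L(v)}$ hits each element of $[p]$ exactly $M/p$ times, which forces $\sizeof{f(v)} = q_v \cdot M/p \geq Mq/p$ deterministically. Establishing the existence of such a $\sigma$ in full generality is the crux of the argument and where I expect most of the work; the plan is to formulate it as a flow/matching problem on the bipartite incidence structure between $U$ and the pairs $(v,i)$ and then invoke an appropriate Hall-type theorem or Lov\'asz Local Lemma argument, using the enormous divisibility of $M$ built into $f(n)$ to absorb any local obstructions. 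Once such a $\sigma$ is produced, trimming each $f(v)$ down to exactly $M/\chi^*$ colours completes the proof.
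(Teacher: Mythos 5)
First, note that the paper does not prove this statement at all: Theorem~\ref{thm:atv} is imported verbatim from Alon--Tuza--Voigt~\cite{atv} and used as a black box in the proof of Theorem~\ref{thm:weakvoigt}, so there is no in-paper argument to compare yours against. Judged on its own, your ingredient~(a) is essentially sound: bounding the numerator and denominator of $\chi^*$ via a basic optimal LP solution, Cramer's rule and Hadamard's inequality is the standard route to an explicit $f(n)$. One slip there: a graph with $\chi^* = p/q$ in lowest terms need not admit a fractional colouring by $p$ independent sets each vertex lying in $q$ of them (the Kneser graph $K(6,2)$ has $\chi^* = 3$ but is not $3$-colourable); you only get independent sets $I_1,\dots,I_a$ with $a/b = p/q$ and $a$ potentially much larger than $p$, which is presumably why the exponent in the theorem is $2n+2$ rather than $n+1$. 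This is repairable, but it matters for what follows because your $\sigma$ must map into $[a]$, not $[p]$.

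The genuine gap is exactly where you flag it: the existence of a labelling $\sigma\colon U \to [a]$ that splits \emph{every} list into $a$ classes of size exactly $M/a$. Such a $\sigma$ need not exist. With $a=2$ and lists $L(v_1)=A\cup B$, $L(v_2)=A\cup C$, $L(v_3)=B\cup C$ with $\sizeof{A}=\sizeof{B}=\sizeof{C}=M/2$, an exact split forces $\alpha+\beta=\alpha+\gamma=\beta+\gamma=M/2$ (where $\alpha$ is the number of class-$1$ colours in $A$, etc.), hence $\alpha=\beta=\gamma=M/4$, which fails whenever $4\nmid M$. More importantly, even for $M$ satisfying the theorem's divisibility hypothesis, feasibility of the exact split is an integer-programming question whose obstructions live in the sizes $m_S$ of the atoms of the Venn diagram of the lists (which the adversary controls independently of $M$ --- e.g.\ they can all be odd) and in determinants of systems of size up to $a\cdot n$ with $a$ as large as $(n+1)^{n+1}$; divisibility of $M$ by $\operatorname{lcm}(1,\dots,f(n))$ controls neither. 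Neither a Hall-type theorem nor the Lov\'asz Local Lemma can deliver an \emph{exact} equipartition of every list simultaneously --- the LLL can only give approximate balance with positive probability, and in the example above the exact event has probability zero. Since the tight case $q_v = b$ is the generic case for an optimal fractional colouring (one may always prune the $I_j$ so that every vertex is covered exactly $b$ times), this unresolved step is not a technical loose end but the entire content of the theorem; the proposal as written is therefore not a proof, and the actual argument of~\cite{atv} must recover the $O(\sqrt{M\log n})$-type deficiency of the random assignment by a different mechanism rather than by forcing exact balance.
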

Lemma~\ref{lem:hatsum} and Lemma~\ref{lem:trivbound} suggest that when
$n$ is odd, the ``worst case'' tuples $(A, \hat{X}_1, \hat{X}_n)$ are
those satisfying $\sizeof{A} + \sizeof{\hat{X}_1} + \sizeof{\hat{X}_n}
= 4m$.  The following lemma shows that any such sets can be
``realized'' on a path of length $3$:
\begin{lemma}\label{lem:realize}
  Let $P$ be a path on $3$ vertices, and
  let $B$, $Y$, $Z$ be sets such that $B \cap Y = \emptyset$, $B \cap
  Z = \emptyset$, and $\sizeof{B} + \sizeof{Y} + \sizeof{Z} =
  4m$. There exists a list assignment $L$ on $P$ such that:
  \begin{itemize}
  \item $\sizeof{L(v)} = 4m$ for all $v \in V(P_3)$, and
  \item $(A, \hat{X}_1, \hat{X}_3) = (B, Y, Z)$, and
  \item $S_L(P) = 8m$.
  \end{itemize}
\end{lemma}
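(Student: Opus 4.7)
The plan is to construct $L$ explicitly and then verify the three required properties by direct computation. The idea is that the colours in $B$ should pass through the entire path (placing them in $A$), the colours in $Y$ should appear in $L(v_1)$ and drop out at $v_2$ (giving $f(c)=2$, putting them in $\hat X_1$), and the colours in $Z$ should sit in $L(v_3)$ but be absent from $L(v_1)$ and $L(v_2)$, yet still make it into $X_3$ (putting them in $\hat X_3$). To make the sizes work out and to realize $\hat X_3$ correctly, I will introduce two auxiliary sets $C_1$ and $C_3$, fresh and pairwise disjoint from $B,Y,Z$ and from each other, of sizes $|C_1|=|Z|$ and $|C_3|=|Y|$.

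Concretely, I will set
\[ L(v_1) = B \cup Y \cup C_1, \qquad L(v_2) = B \cup C_1 \cup C_3, \qquad L(v_3) = B \cup Z \cup C_3, \]
all of these being disjoint unions. The size condition $|B|+|Y|+|Z|=4m$ immediately gives $|L(v_i)|=4m$ for $i=1,2,3$. Then $X_1 = L(v_1) = B \cup Y \cup C_1$, $X_2 = L(v_2) - L(v_1) = C_3$, and $X_3 = L(v_3) - X_2 = B \cup Z$, so
\[ S_L(P_3) = |B|+|Y|+|C_1| + |C_3| + |B|+|Z| = 4m + |Y| + |B| + |Z| = 8m. \]

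For $A$, since $Y,Z,C_1,C_3$ are pairwise disjoint and disjoint from $B$, the common intersection $L(v_1)\cap L(v_2)\cap L(v_3)$ reduces to $B$, as required. For $\hat X_3 = X_3 - A = (B\cup Z) - B = Z$, as required. For $\hat X_1$, the colours in $L(v_1)-A$ split as $Y \cup C_1$: each $c\in Y$ satisfies $c\notin L(v_2)$, so $f(c)=2$ (even), while each $c\in C_1$ satisfies $c\in L(v_2)$ but $c\notin L(v_3)$, so $f(c)=3$ (odd). Hence $\hat X_1 = Y$, as required.

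There is essentially no obstacle here; the only thing to watch is that the auxiliary sets $C_1,C_3$ of the prescribed cardinalities can always be chosen disjoint from everything else, which is automatic since the colour universe is unrestricted. The verification of the three listed properties is routine once the construction is written down, and the value $S_L(P_3)=8m$ drops out of the definition.
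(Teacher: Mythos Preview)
Your construction is correct and is essentially identical to the paper's own proof: the paper introduces auxiliary sets $J_1,J_2$ of sizes $4m-\sizeof{B}-\sizeof{Y}$ and $4m-\sizeof{B}-\sizeof{Z}$ (which equal your $\sizeof{Z}$ and $\sizeof{Y}$ respectively) and defines $L(v_1)=B\cup Y\cup J_1$, $L(v_2)=B\cup J_1\cup J_2$, $L(v_3)=B\cup Z\cup J_2$, exactly as you do with $C_1,C_3$ in place of $J_1,J_2$. Your explicit verification of $A$, $\hat X_1$, $\hat X_3$, and $S_L(P_3)$ is in fact more detailed than the paper's, which simply asserts that the properties are easy to check.
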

\begin{proof}
  Let $J_1$ and $J_2$ be sets disjoint from each other and disjoint
  from $B \cup Y \cup Z$ such that
  \begin{align*}
    \sizeof{J_1} &= 4m - \sizeof{B} - \sizeof{Y},\\
    \sizeof{J_2} &= 4m - \sizeof{B} - \sizeof{Z}.
  \end{align*}
  Observe that
  \[ \sizeof{B} + \sizeof{J_1} + \sizeof{J_2} = 8m - \sizeof{B} - \sizeof{Y} - \sizeof{Z} = 4m. \]
  Let $v_1, v_2, v_3$ be the vertices of $P$ written in order, and consider
  the following list assignment:
  \begin{align*}
    L(v_1) &= B \cup Y \cup J_1, \\
    L(v_2) &= B \cup J_1 \cup J_2, \\
    L(v_3) &= B \cup Z \cup J_2.
  \end{align*}
  It is easy to verify that $L$ has the desired properties.
\end{proof}
Lemma~\ref{lem:realize} allows us to obtain a partial converse
of Lemma~\ref{lem:rubin}, subject to certain restrictions on the choice
of the vertex $v$.
\begin{lemma}\label{lem:p5}
  Let $G$ be a graph containing a path $P$ on $5$ vertices which all
  have degree $2$ in $G$, and let $G'$ be the graph obtained by
  deleting the middle vertex of $P$ and merging its neighbors. The
  original graph $G$ is $\ab{4m}{2m}$-choosable if and only if the
  merged graph $G'$ is $\ab{4m}{2m}$-choosable.
\end{lemma}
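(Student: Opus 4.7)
The forward direction is immediate from Voigt's extension of Lemma~\ref{lem:rubin} cited in the paragraph following its statement: deleting a vertex and merging its (nonadjacent) neighbors preserves $\ab{2m}{m}$-choosability for every $m$. Applied with the vertex $v_3$, whose neighbors $v_2,v_4$ are not adjacent in $G$, $\ab{4m}{2m}$-choosability transfers from $G$ to $G'$.

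For the converse, suppose $G'$ is $\ab{4m}{2m}$-choosable and let $L$ be any list assignment on $G$ with $\sizeof{L(v)}=4m$. The plan is to define a list assignment $L'$ on $G'$ with $L'(v)=L(v)$ for $v\ne w$ and a carefully chosen $4m$-element set $L'(w)$, such that every $\ab{L'}{2m}$-coloring $\phi'$ of $G'$ extends to an $\ab{L}{2m}$-coloring of $G$; the existence of such a $\phi'$ follows from the choosability hypothesis. The extension of $\phi'$ to $G$ requires only that the three lost vertices $v_2,v_3,v_4$ be colored as a 3-path $Q=v_2v_3v_4$ with endpoint restrictions $\phi'(v_1),\phi'(v_5)$; by Lemma~\ref{lem:xuding} and Lemma~\ref{lem:oddhat} this amounts to the damage inequality $\dam_{L,Q}(\phi'(v_1),\phi'(v_5))\leq S_L(Q)-6m$. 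I would construct $L'(w)$ so that this inequality is forced by the only $w$-side constraint, $\sizeof{L'(w)\cap(\phi'(v_1)\cup\phi'(v_5))}\leq 2m$, by taking $L'(w)$ to be the middle list of a fresh 3-path produced by Lemma~\ref{lem:realize} applied to the triple $(A,\hat X_1,\hat X_3)$ of $Q$ under $L$, with the junk sets $J_1,J_2$ chosen inside $L(v_1)$ and $L(v_5)$ so that the fresh endpoint lists coincide with $L(v_1)$ and $L(v_5)$.

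The hard part will be reconciling the disjointness hypotheses of Lemma~\ref{lem:realize} (namely $J_1\cap J_2=\emptyset$ and disjointness of $J_1,J_2$ from $A\cup\hat X_1\cup\hat X_3$) with the constraints $J_1\subseteq L(v_1)$ and $J_2\subseteq L(v_5)$. A \emph{crossing} color in $L(v_1)\cap L(v_5)$ lying outside $A\cup\hat X_1\cup\hat X_3$ would need to sit in both junk sets at once, violating the disjointness; moreover, when $\sizeof{A}+\sizeof{\hat X_1}+\sizeof{\hat X_3}\neq 4m$ the numerical hypothesis of Lemma~\ref{lem:realize} fails outright and must be worked around. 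Resolving both issues will require a case analysis on crossings classified by the behavior of the corresponding colors in the 5-path (for instance by their $f$-value in the damage framework of Lemma~\ref{lem:oddhat}), exploiting the observation that such colors contribute neither to the $Q$-damage nor to the $w$-side intersection count, so they can be absorbed into either junk set without breaking the implication.
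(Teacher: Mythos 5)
Your forward direction is fine and matches the paper. The converse, however, rests on a decomposition that cannot work. You keep $v_1$ and $v_5$ (with their original lists) in the precoloured part and replace only the middle three vertices $v_2v_3v_4$ by the merged vertex $w$. The only constraint $w$ then imposes on the pair $(p,q)=(\phi'(v_1),\phi'(v_5))$ is $\sizeof{L'(w)\cap(p\cup q)}\leq 2m$, and this is vacuous whenever $p=q$, since then $\sizeof{p\cup q}=2m$. But the $3$-path $Q=v_2v_3v_4$ can be blocked by a pair with $p=q$: take $m=1$, $L(v_2)=\{a,b,c,d\}$, $L(v_3)=\{c,d,e,f\}$, $L(v_4)=\{a,b,e,f\}$, with $a,b\in L(v_1)\cap L(v_5)$. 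Here $S_L(Q)=8$, $A=\emptyset$, $\hat{X}_1=\hat{X}_3=\{a,b\}$ (so even your numerical condition $\sizeof{A}+\sizeof{\hat{X}_1}+\sizeof{\hat{X}_3}=4m$ holds), yet $\dam_{L,Q}(\{a,b\},\{a,b\})=4>S_L(Q)-6$; concretely, $\phi(v_2)=\{c,d\}$ and $\phi(v_3)=\{e,f\}$ are forced and leave nothing for $v_4$. For \emph{every} choice of $L'(w)$ the colouring $\phi'(v_1)=\phi'(v_5)=\{a,b\}$ extends to $w$ in $G'$, and since you must accept whatever colouring the choosability of $G'$ hands you, no choice of $L'(w)$ can force the damage inequality. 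The ``hard part'' you flag (crossings in $L(v_1)\cap L(v_5)$ outside $A\cup\hat{X}_1\cup\hat{X}_3$) is therefore not the real obstruction; colours in $\hat{X}_1\cap\hat{X}_3$ are, and they cannot be absorbed by any single middle list.

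The paper avoids this by replacing the \emph{entire} five-vertex path with a fresh three-vertex path: Lemma~\ref{lem:realize} is applied to the triple $(A,\hat{X}_1,\hat{X}_5)$ of the $5$-path $P$ itself (padded or truncated so the sizes total $4m$), and the resulting lists are installed on all three vertices of $P'$, including the images of $v_1$ and $v_5$. The pair being controlled is then $(\phi(w_0),\phi(z_0))$ for the neighbours of $v_1,v_5$ \emph{outside} $P$, and the damage to $P'$ dominates the damage to $P$ by construction, with $S_{L'}(P')=8m$ at its minimum. Note also that your plan to run Lemma~\ref{lem:realize} on the triple of $Q$ with junk sets chosen ``inside $L(v_1)$ and $L(v_5)$ so that the fresh endpoint lists coincide with $L(v_1)$ and $L(v_5)$'' is not coherent even on its own terms: the construction requires the endpoint lists to contain $B\cup Y$ and $B\cup Z$, whereas $A,\hat{X}_1,\hat{X}_3$ for $Q$ live inside $L(v_2)$ and $L(v_4)$ and bear no relation to $L(v_1)$ and $L(v_5)$.
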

\begin{proof}
  By Lemma~\ref{lem:rubin}, it suffices to show that if $G'$
  is $\ab{4m}{2m}$-choosable, then $G$ is $\ab{4m}{2m}$-choosable.
  Let $v_1, \ldots, v_5$ be the vertices of $P$, written in order.
  Let $P'$ be the $3$-vertex path in $G'$ corresponding to $P$,
  and let $v'_1, v'_2, v'_3$ be the vertices of $G'$, so that
  $v'_1 = v_1$ and $v'_3 = v_5$.

  Let $L$ be any list assignment for $G$ such that $\sizeof{L(v)} =
  4m$ for all $v \in V(G)$, and let $A,\hat{X}_1, \hat{X}_5$ be
  computed relative to $P$. We will define sets $B,Y,Z$
  based on $A, \hat{X}_1, \hat{X}_5$ and apply Lemma~\ref{lem:realize}
  to obtain a list assignment $L'$ on the shorter path $P'$.
  The definition is slightly different depending on whether
  $\sizeof{A} + \sizeof{\hat{X}_1} + \sizeof{\hat{X}_5} \leq 4m$:
  we either arbitrarily add elements or arbitrarily remove elements
  in order to reach the desired sum.
  \begin{itemize}
  \item When $\sizeof{A} + \sizeof{\hat{X}_1} + \sizeof{\hat{X}_5}
    \leq 4m$, let $B,Y,Z$ be arbitrary supersets of $A, \hat{X}_1,
    \hat{X}_n$ respectively such that $B \cap Y = \emptyset$, $B \cap
    Z = \emptyset$, and $\sizeof{B} + \sizeof{Y} + \sizeof{Z} = 4m$.
  \item When $\sizeof{A} + \sizeof{\hat{X}_1} + \sizeof{\hat{X}_5} > 4m$,
    let $B,Y,Z$ be arbitrary subsets of $A, \hat{X}_1, \hat{X}_5$ respectively,
    such that $\sizeof{B} + \sizeof{Y} + \sizeof{Z} = 4m$.
  \end{itemize}
  In either case, we may apply Lemma~\ref{lem:realize} to obtain a
  list assignment $L'$ on the shorter path $P'$ such that:
  \begin{itemize}
  \item $\sizeof{L'(v)} = 4m$ for all $v \in V(P')$, and
  \item $(A', \hat{X}'_1, \hat{X}'_3) = (B,Y,Z)$, and
  \item $S_{L'}(P') = 8m$.
  \end{itemize}
  We extend $L'$ to all of $G'$ by defining $L'(v) = L(v)$ for $v
  \notin V(P')$.

  Let $G_0 = G' - V(P') = G - V(P)$, and let $w,z$ be the neighbors of
  $v'_1, v'_3$ in $G_0$, respectively.  Since $G'$ is
  $\ab{4m}{2m}$-choosable, Lemma~\ref{lem:abstract} says there is a
  proper $(L':2m)$-colouring $\phi$ of $G_0$ such that $\dam_{L',P'}(\phi(w),
  \phi(z)) \leq 2m$.

  If $\sizeof{A} + \sizeof{\hat{X}_1} + \sizeof{\hat{X}_n} \leq 4m$,
  then Equation~\eqref{eq:damformula} yields
  \[ \dam_{L,P}(\phi(w), \phi(z)) \leq \dam_{L', P'}(\phi(w), \phi(z)) \leq 2m, \]
  while if $\sizeof{A} + \sizeof{\hat{X}_1} + \sizeof{\hat{X}_n} = 4m+c$
  for some $c > 0$, then Equation~\eqref{eq:damformula} yields
  \[ \dam_{L,P}(\phi(w), \phi(z)) \leq \dam_{L',P'}(\phi(w), \phi(z))+c \leq 2m+c. \]
  Applying Lemma~\ref{lem:trivbound} in the first case and Lemma~\ref{lem:hatsum}
  in the second, we obtain
 \[ \dam_{L,P}(\phi(w), \phi(z))
  \leq S_L(P) - 10m. \] Applying Lemma~\ref{lem:abstract} in the other
  direction, we see that $G$ is $\ab{L}{2m}$-colourable.  Since $L$
  was arbitrary, $G$ is $\ab{4m}{2m}$-choosable.
\end{proof}
\begin{proof}[Proof of Theorem~\ref{thm:weakvoigt}]
  There are only finitely many bipartite $3$-choosable-critical graphs
  which are minimal with respect to the reduction of
  Lemma~\ref{lem:p5}.  In particular, all such graphs have at most
  $14$ vertices, the largest such graph being
  $\Theta_{5,5,5}$. Let $f$ be the function given by Theorem~\ref{thm:atv},
  and let $\fmax = \max\{f(n) \st n \leq 14\}$.

  By Theorem~\ref{thm:atv}, if $k/4$ is divisible by all numbers up to $\fmax$,
  then all minimal bipartite $3$-choosable-critical graphs are
  $\ab{4k}{2k}$ choosable.  In particular, fixing the smallest such
  $k$ and applying Lemma~\ref{lem:p5}, we see that all bipartite
  $3$-choosable-critical graphs are $\ab{4km}{2km}$-choosable for all
  $m$.
\end{proof}
\section{Characterizing the $\ab{4}{2}$-Choosable Graphs: A Conjecture}
\label{sec:all42}
Having determined which 3-choosable-critical graphs are
$\ab{4}{2}$-choosable, the next natural step in investigating
$\ab{4}{2}$-choosability is to characterize \emph{all}
$\ab{4}{2}$-choosable graphs, mirroring Rubin's characterization of
the $2$-choosable graphs~\cite{rubin}. As Theorem~\ref{thm:main}
shows, the $\ab{4}{2}$-choosable graphs have considerably more variety
than the $2$-choosable graphs, so the proof of any such
characterization is likely to be much more involved than Rubin's
proof.

 Rubin observed that $G$ is
$2$-choosable if and only if its core is $2$-choosable, and the same
observation holds for $\ab{4}{2}$-choosability. It clearly
also suffices to consider only connected graphs, so we restrict to
the case where $G$ is connected with minimum degree at least $2$.
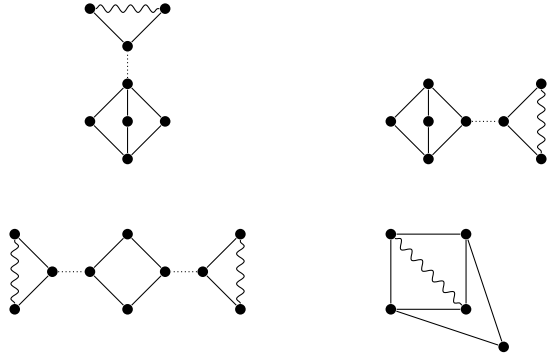
\begin{figure}
  \centering
  \begin{tikzpicture}
    \begin{scope}[xshift=-2cm, yshift=2cm]
      \apoint{} (u) at (0cm, 0cm) {};
      \apoint{} (w) at (0cm, .5cm) {};
      \apoint{} (v) at (0cm, -1cm) {};
      \apoint{} (x1) at (0cm, -.5cm) {};
      \apoint{} (x2) at (.5cm, -.5cm) {};
      \apoint{} (x3) at (-.5cm, -.5cm) {};
      \apoint{} (y1) at (-.5cm, 1cm) {};
      \apoint{} (y2) at (.5cm, 1cm) {};
      \draw (u) -- (x1) -- (v) -- (x2) -- (u) -- (x3) -- (v);
      \draw (y1) -- (w) -- (y2);
      \draw[arbpath] (u) -- (w);
      \draw[stretchy] (y1) -- (y2);
    \end{scope}
    \begin{scope}[xshift=2cm, yshift=2cm]
      \apoint{} (u) at (0cm, 0cm) {};
      \apoint{} (v) at (0cm, -1cm) {};
      \apoint{} (x1) at (0cm, -.5cm) {};
      \apoint{} (x2) at (.5cm, -.5cm) {};
      \apoint{} (t) at (1cm, -.5cm) {};
      \apoint{} (x3) at (-.5cm, -.5cm) {};
      \apoint{} (y1) at (1.5cm, 0cm) {};
      \apoint{} (y2) at (1.5cm, -1cm) {};
      \draw (u) -- (x1) -- (v) -- (x2) -- (u) -- (x3) -- (v);
      \draw (y1) -- (t) -- (y2);
      \draw[arbpath] (x2) -- (t);
      \draw[stretchy] (y1) -- (y2);
    \end{scope}
    \begin{scope}[xshift=-2cm, yshift=-.5cm]
      \begin{scope}[xshift=-1.5cm]
        \apoint{} (v) at (1cm, 0cm) {};
        \apoint{} (vp) at (.5cm, 0cm) {};
        \apoint{} (w) at (2cm, 0cm) {};
        \apoint{} (wp) at (2.5cm, 0cm) {};

        \apoint{} (x1) at (0cm, -.5cm) {};
        \apoint{} (x2) at (0cm, .5cm) {};
        \apoint{} (y1) at (1.5cm, -.5cm) {};
        \apoint{} (y2) at (1.5cm, .5cm) {};
        \apoint{} (t1) at (3cm, -.5cm) {};
        \apoint{} (t2) at (3cm, .5cm) {};
        \draw[stretchy] (x1) -- (x2);
        \draw[stretchy] (t1) -- (t2);
        \draw (w) -- (y1) -- (v) -- (y2) -- (w);
        \draw (x1) -- (vp) -- (x2);
        \draw (t1) -- (wp) -- (t2);
        \draw[arbpath] (vp) -- (v);
        \draw[arbpath] (wp) -- (w);
      \end{scope}
    \end{scope}
    \begin{scope}[xshift=2cm, yshift=-.5cm]
      \begin{scope}[yscale=-1]
        \apoint{} (c1) at (-.5cm, -.5cm) {};
        \apoint{} (c2) at (.5cm, -.5cm) {};
        \apoint{} (c3) at (.5cm, .5cm) {};
        \apoint{} (c4) at (-.5cm, .5cm) {};
        \draw (c1) -- (c2) -- (c3) -- (c4) -- (c1);
        \draw[stretchy] (c1) -- (c3);
        \apoint{} (d) at (1cm, 1cm) {};
        \draw (c2) -- (d) -- (c4);
      \end{scope}
    \end{scope}
  \end{tikzpicture}
  \caption{Exceptional graphs in Conjecture~\ref{coj:ourconj}. Wavy
    lines represent paths with an arbitrary odd number of vertices.
    Dotted lines represent paths with an arbitrary number (any
    parity) of vertices, possibly $1$.}
  \label{fig:manygraphs}
\end{figure}
\begin{figure}
  \centering
  \begin{tikzpicture}[xshift=-2cm, yshift=-.5cm]
    \begin{scope}[xshift=-1.5cm]
      \apoint{} (v) at (1cm, 0cm) {};
      \apoint{} (w) at (2cm, 0cm) {};
      \apoint{} (wp) at (2.5cm, 0cm) {};

      \apoint{} (x1) at (.5cm, -.5cm) {};
      \apoint{} (x2) at (.5cm, .5cm) {};
      \apoint{} (x0) at (0cm, 0cm) {};

      \apoint{} (y1) at (1.5cm, -.5cm) {};
      \apoint{} (y2) at (1.5cm, .5cm) {};
      \apoint{} (t1) at (3cm, -.5cm) {};
      \apoint{} (t2) at (3cm, .5cm) {};
      \apoint{} (s1) at (3.5cm, -.5cm) {};
      \apoint{} (s2) at (4cm, 0cm) {};
      \apoint{} (s3) at (3.5cm, .5cm) {};

      \draw (x1) -- (x0) -- (x2) -- (v) -- (x1);
      \draw (w) -- (y1) -- (v) -- (y2) -- (w);
      \draw(wp) -- (t1) -- (s1) -- (s2) -- (s3) -- (t2) -- (wp);
      \draw (wp) -- (w);
    \end{scope}
  \end{tikzpicture}
  \caption{One possible realization of the lower-left graph in Figure~\ref{fig:manygraphs}.}
  \label{fig:example}
\end{figure}
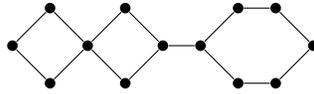
\begin{conjecture}\label{coj:ourconj}
  If $G$ is a connected graph with $\delta(G) \geq 2$, then $G$ is
  $\ab{4}{2}$-choosable if and only if one of the following holds:
  \begin{itemize}
  \item $G$ is $2$-choosable, or
  \item $G$ is one of the $3$-choosable-critical graphs listed in
    Theorem~\ref{thm:main}, or
  \item $G$ is one of the exceptional graphs shown in
    Figure~\ref{fig:manygraphs}.
  \end{itemize}
\end{conjecture}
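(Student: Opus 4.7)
For the positive direction, $2$-choosable graphs are $\ab{4}{2}$-choosable by Tuza and Voigt~\cite{tuzavoigt-twom}, and the $3$-choosable-critical case is exactly Theorem~\ref{thm:main}. What remains is to verify $\ab{4}{2}$-choosability for each exceptional family shown in Figure~\ref{fig:manygraphs}. I would apply Lemma~\ref{lem:abstract}: in each exceptional graph, let $X$ be the set of degree-$3$ vertices of the theta-like substructures, so that $G - X$ is a disjoint union of odd paths. Then, for each list assignment $L$, show that among the $\binom{4}{2}^{|X|}$ candidate precolourings of $X$, at least one satisfies the damage bound $\dam_{L,P}(p,q) \leq S_L(P) - 2|V(P)|$ on every component $P$ of $G-X$, using Lemmas~\ref{lem:hatsum} and~\ref{lem:trivbound}; the counting arguments of Section~\ref{sec:goodtheta}, especially Lemma~\ref{lem:intblock} and its corollaries, provide the template. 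The computer analysis alluded to in the paper presumably carries out exactly this enumeration for each exceptional graph.

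For the negative direction, let $G$ be a connected graph with $\delta(G) \geq 2$ that satisfies none of the three listed alternatives. Since $G$ is not $2$-choosable, it contains a $3$-choosable-critical subgraph $H$. I would split into two subcases. If $H$ is one of the non-$\ab{4}{2}$-choosable $3$-choosable-critical graphs, lift the explicit bad list assignments from Figure~\ref{fig:ctx} (or their $\Theta_{2,2,2,2t}$ analogues) to $G$ by choosing lists on $V(G) - V(H)$ that prevent any extension of the forced partial colouring on $H$; since each vertex of $V(G)-V(H)$ has a $4$-list, one has considerable freedom to block extensions. Otherwise $H$ is a listed $\ab{4}{2}$-choosable $3$-choosable-critical graph; since $G$ itself is not in the exceptional family, the attachment of $G - V(H)$ to $H$ must be non-conforming in some identifiable way. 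I would then apply Lemma~\ref{lem:rubin} iteratively to reduce $G$ by deleting a low-degree vertex outside $H$ and merging its neighbors, and show that every non-conforming attachment reduces (possibly after several steps) to a graph containing a known non-$\ab{4}{2}$-choosable subgraph such as $\Theta_{3,3,3}$, $\Theta_{4,4,4}$, or $\Theta_{2,2,2,4}$.

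The main obstacle is the structural case analysis in the second subcase. The exceptional configurations in Figure~\ref{fig:manygraphs} allow paths of arbitrary length and parity, so one must classify all ways of attaching edges, paths, and cycles to an even cycle, to $\Theta_{2,2s,2t}$, or to the union of two even cycles, while preserving $\ab{4}{2}$-choosability. Adjacent to each exceptional family are many near-miss configurations that fail to be $\ab{4}{2}$-choosable for subtle reasons involving the interaction of the damage function across several theta-like blocks; each such near-miss requires either a concrete obstructing list assignment or a reduction to a known obstruction. This is likely why the statement remains a conjecture: some new technique beyond the damage framework of Section~\ref{sec:damage}, perhaps an inductive scheme on the number of theta-like blocks combined with a strengthened version of Lemma~\ref{lem:intblock} that tracks damage across long chains, may be required to complete the analysis uniformly.
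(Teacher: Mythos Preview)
This statement is a \emph{conjecture}; the paper offers no proof and explicitly says so. What the paper does provide is (i) a computer verification of the conjecture for all graphs on at most $9$ vertices, (ii) a computer verification of the positive direction for the exceptional families via exactly the Lemma~\ref{lem:abstract} scheme you describe (enumerate list assignments on a small set $X$ with $G-X$ a linear forest, then enumerate the tuples $(A,\hat X_1,\hat X_n)$ for each path), and (iii) the list of minimal obstructions in Figure~\ref{fig:badgraphs}. Your positive-direction outline therefore matches the paper's approach precisely, though the paper stresses that no human-readable argument has been found even for that half.

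On the negative direction, your first subcase is over-engineered: if $H\subset G$ is not $\ab{4}{2}$-choosable, then $G$ is not $\ab{4}{2}$-choosable immediately, since any bad list assignment on $H$ extended arbitrarily to $V(G)\setminus V(H)$ is bad for $G$ (a proper colouring of $G$ restricts to a proper colouring of $H$). No care in choosing the extra lists is needed.

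Your second subcase is where the real content lies, and here there is a genuine gap in the plan. The reduction via Lemma~\ref{lem:rubin} goes the wrong way for a clean induction: it tells you that contracting preserves $\ab{4}{2}$-choosability, so to prove $G$ is \emph{not} $\ab{4}{2}$-choosable you must exhibit a non-$\ab{4}{2}$-choosable \emph{contraction} of $G$, not merely a subgraph. More seriously, Figure~\ref{fig:badgraphs} displays a zoo of minimal non-$\ab{4}{2}$-choosable graphs well beyond $\Theta_{3,3,3}$, $\Theta_{4,4,4}$, and $\Theta_{2,2,2,4}$ (including $K_{3,3}$, $K_{2,5}$, several M\"obius--Kantor-like graphs, and others), none of which are $3$-choosable-critical. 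Any structural case analysis must show that every non-listed $G$ reduces to, or contains, one of these; the variety of obstructions is exactly what the paper cites as the barrier to a proof. Your closing paragraph diagnoses this correctly, but the sketch before it understates how many base obstructions would have to be handled.
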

Figure~\ref{fig:manygraphs} contains some complex visual notation used
to represent parameterized families of graphs; Figure~\ref{fig:example}
shows an example of how to interpret this notation.

Conjecture~\ref{coj:ourconj} is supported by substantial evidence.
Through computer search, we determined that among all graphs with at
most $9$ vertices, only the graphs given by
Conjecture~\ref{coj:ourconj} are $\ab{4}{2}$-choosable. It appears
that all graphs with a larger number of vertices are either one of the
$\ab{4}{2}$-choosable graphs listed in Conjecture~\ref{coj:ourconj},
or contain some subgraph already known to be
non-$\ab{4}{2}$-choosable.

A list of ``small'' minimal non-$\ab{4}{2}$-choosable graphs, each
with a nonchoosable list assignment, is given in
Figure~\ref{fig:badgraphs}. Each of the list assignments was found by
computer search.  The variety of these graphs represents a significant
obstruction to any proof of Conjecture~\ref{coj:ourconj}, which would
seem to require a correspondingly complex structure theorem. While we
believe that such a proof could be found, it would likely be quite
long and beyond the scope of this paper.

The computer analysis for the positive direction of
Conjecture~\ref{coj:ourconj} is based on
Lemma~\ref{lem:abstract}. Each of the graphs in
Figure~\ref{fig:manygraphs} has a small set of vertices $X$ such that
$G-X$ is a linear forest, with only the endpoints of its paths having
neighbors in $X$.  Rather than generating all list assignments
for the entire graph $G$, it suffices to generate all list assignments
for $X$, and for each list assignment, to generate the possible tuples
$(A, \hat{X}_1, \hat{X}_n)$ for each of the paths in $G-X$. For each
such tuple, we then search for a partial colouring $\phi$ of $G[X]$
that satisfies the hypothesis of Lemma~\ref{lem:abstract}.

However, we have not been able to find a human-readable proof that the
exceptional graphs in Conjecture~\ref{coj:ourconj} are indeed
$\ab{4}{2}$-choosable, nor have we been able to prove the structure
theorem alluded to above.
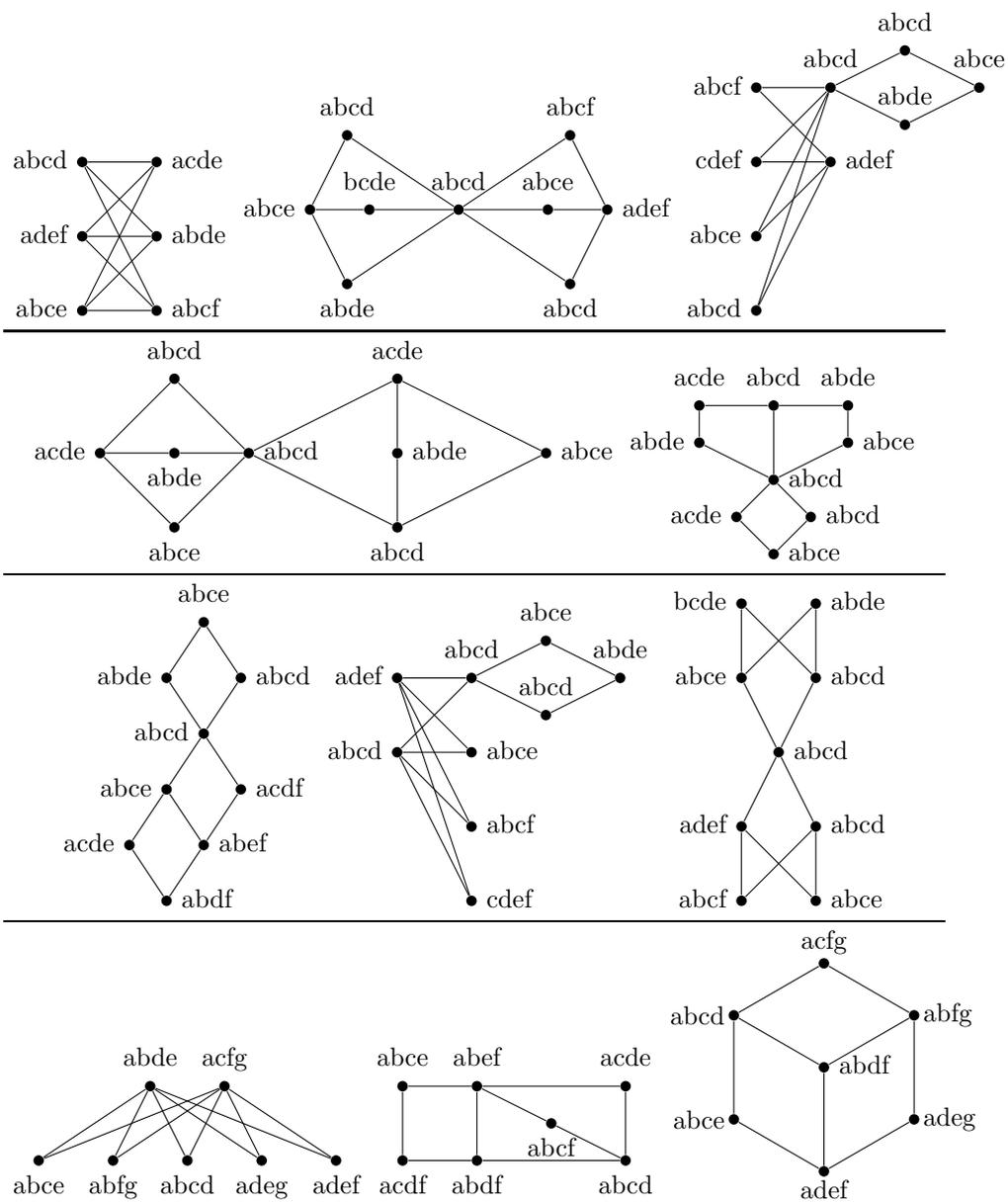
\begin{figure}
  \centering
  \begin{tikzpicture}
    \anonlist{left}{adef} (v0) at (0cm, 1cm) {};
    \anonlist{right}{abcf} (v1) at (1cm, 0cm) {};
    \anonlist{right}{abde} (v2) at (1cm, 1cm) {};
    \anonlist{right}{acde} (v3) at (1cm, 2cm) {};
    \anonlist{left}{abcd} (v4) at (0cm, 2cm) {};
    \anonlist{left}{abce} (v5) at (0cm, 0cm) {};
    \draw (v0) -- (v1);
    \draw (v0) -- (v2);
    \draw (v0) -- (v3);
    \draw (v1) -- (v4);
    \draw (v1) -- (v5);
    \draw (v2) -- (v4);
    \draw (v2) -- (v5);
    \draw (v3) -- (v4);
    \draw (v3) -- (v5);
  \end{tikzpicture}
  \begin{tikzpicture}[scale=1]
    \anonlist{above}{abcd} (v0) at (-1.5cm, 1cm) {};
    \anonlist{above}{abce} (v1) at (1.2cm, 0cm) {};
    \anonlist{above}{abcf} (v2) at (1.5cm, 1cm) {};
    \anonlist{below}{abcd} (v3) at (1.5cm, -1cm) {};
    \anonlist{below}{abde} (v4) at (-1.5cm, -1cm) {};
    \anonlist{above}{bcde} (v5) at (-1.2cm, 0cm) {};
    \anonlist{left}{abce} (v6) at (-2cm, 0cm) {};
    \anonlist{right}{adef} (v7) at (2cm, 0cm) {};
    \anonlist{above}{abcd} (v8) at (0cm, 0cm) {};
    \draw (v0) -- (v8);
    \draw (v0) -- (v6);
    \draw (v1) -- (v8);
    \draw (v1) -- (v7);
    \draw (v2) -- (v8);
    \draw (v2) -- (v7);
    \draw (v3) -- (v8);
    \draw (v3) -- (v7);
    \draw (v4) -- (v8);
    \draw (v4) -- (v6);
    \draw (v5) -- (v8);
    \draw (v5) -- (v6);
  \end{tikzpicture}
  \begin{tikzpicture}
    \anonlist{above}{abcd} (v0) at (1cm, .5cm) {};
    \anonlist{above}{abde} (v1) at (1cm,  -.5cm) {};
    \anonlist{left}{abcf} (v2) at (-1cm, 0.0cm) {};
    \anonlist{left}{cdef} (v3) at (-1cm, -1cm) {};
    \anonlist{left}{abce} (v4) at (-1cm, -2cm) {};
    \anonlist{left}{abcd} (v5) at (-1cm, -3cm) {};
    \anonlist{above}{abce} (v6) at (2.0cm, 0cm) {};
    \anonlist{right}{adef} (v7) at (0cm, -1cm) {};
    \anonlist{above}{abcd} (v8) at (0cm, 0cm) {};
    \draw (v0) -- (v8);
    \draw (v0) -- (v6);
    \draw (v1) -- (v8);
    \draw (v1) -- (v6);
    \draw (v2) -- (v8);
    \draw (v2) -- (v7);
    \draw (v3) -- (v8);
    \draw (v3) -- (v7);
    \draw (v4) -- (v8);
    \draw (v4) -- (v7);
    \draw (v5) -- (v8);
    \draw (v5) -- (v7);
  \end{tikzpicture}\\\hrule
  \begin{tikzpicture}
    \anonlist{above}{abcd} (v0) at (-1cm, 1cm) {};
    \anonlist{below}{abde} (v1) at (-1cm, 0cm) {};
    \anonlist{below}{abce} (v2) at (-1cm, -1cm) {};
    \anonlist{right}{abce} (v3) at (4cm, 0cm) {};
    \anonlist{right}{abde} (v4) at (2cm, 0cm) {};
    \anonlist{above}{acde} (v5) at (2cm, 1cm) {};
    \anonlist{below}{abcd} (v6) at (2cm, -1cm) {};
    \anonlist{left}{acde} (v7) at (-2cm, 0cm) {};
    \anonlist{right}{abcd} (v8) at (0cm, 0cm) {};
    \draw (v0) -- (v8);
    \draw (v0) -- (v7);
    \draw (v1) -- (v8);
    \draw (v1) -- (v7);
    \draw (v2) -- (v8);
    \draw (v2) -- (v7);
    \draw (v3) -- (v5);
    \draw (v3) -- (v6);
    \draw (v4) -- (v5);
    \draw (v4) -- (v6);
    \draw (v5) -- (v8);
    \draw (v6) -- (v8);
  \end{tikzpicture}
  \begin{tikzpicture}
    \anonlist{right}{abcd} (v0) at (.5cm, -.5cm) {};
    \anonlist{left}{acde} (v1) at (-.5cm, -.5cm) {};
    \anonlist{right}{abce} (v2) at (1cm, .5cm) {};
    \anonlist{left}{abde} (v3) at (-1cm, .5cm) {};
    \anonlist{above}{abde} (v4) at (1cm, 1cm) {};
    \anonlist{above}{acde} (v5) at (-1cm, 1cm) {};
    \anonlist{right}{abce} (v6) at (0cm, -1cm) {};
    \anonlist{above}{abcd} (v7) at (0cm, 1cm) {};
    \anonlist{right}{abcd} (v8) at (0cm, 0cm) {};
    \draw (v0) -- (v8);
    \draw (v0) -- (v6);
    \draw (v1) -- (v8);
    \draw (v1) -- (v6);
    \draw (v2) -- (v8);
    \draw (v2) -- (v4);
    \draw (v3) -- (v8);
    \draw (v3) -- (v5);
    \draw (v4) -- (v7);
    \draw (v5) -- (v7);
    \draw (v7) -- (v8);
  \end{tikzpicture}\\\hrule
  \begin{tikzpicture}[yscale=1.5]
    \anonlist{right}{acdf} (v0) at (.5cm, -.5cm) {};
    \anonlist{left}{abde} (v1) at (-.5cm, .5cm) {};
    \anonlist{right}{abcd} (v2) at (.5cm, .5cm) {};
    \anonlist{right}{abdf} (v3) at (-.5cm, -1.5cm) {};
    \anonlist{left}{acde} (v4) at (-1cm, -1cm) {};
    \anonlist{above}{abce} (v5) at (0cm, 1cm) {};
    \anonlist{left}{abce} (v6) at (-.5cm, -.5cm) {};
    \anonlist{right}{abef} (v7) at (0cm, -1cm) {};
    \anonlist{left}{abcd} (v8) at (0cm, 0cm) {};
    \draw (v0) -- (v8);
    \draw (v0) -- (v7);
    \draw (v1) -- (v8);
    \draw (v1) -- (v5);
    \draw (v2) -- (v8);
    \draw (v2) -- (v5);
    \draw (v3) -- (v4);
    \draw (v3) -- (v7);
    \draw (v4) -- (v6);
    \draw (v6) -- (v8);
    \draw (v6) -- (v7);
  \end{tikzpicture}
  \begin{tikzpicture}
    \anonlist{above}{abde} (v0) at (2cm, 0.0cm) {};
    \anonlist{above}{abce} (v1) at (1cm, .5cm) {};
    \anonlist{above}{abcd} (v2) at (1cm, -.5cm) {};
    \anonlist{right}{abce} (v3) at (0cm, -1cm) {};
    \anonlist{right}{abcf} (v4) at (0cm,  -2cm) {};
    \anonlist{right}{cdef} (v5) at (0cm, -3cm) {};
    \anonlist{left}{adef} (v6) at (-1cm, 0cm) {};
    \anonlist{left}{abcd} (v7) at (-1cm, -1cm) {};
    \anonlist{above}{abcd} (v8) at (0cm, 0cm) {};
    \draw (v0) -- (v1);
    \draw (v0) -- (v2);
    \draw (v1) -- (v8);
    \draw (v2) -- (v8);
    \draw (v3) -- (v6);
    \draw (v3) -- (v7);
    \draw (v4) -- (v6);
    \draw (v4) -- (v7);
    \draw (v5) -- (v6);
    \draw (v5) -- (v7);
    \draw (v6) -- (v8);
    \draw (v7) -- (v8);
  \end{tikzpicture}
  \begin{tikzpicture}
    \anonlist{right}{abce} (v0) at (.5cm, -2cm) {};
    \anonlist{left}{abcf} (v1) at (-.5cm, -2cm) {};
    \anonlist{left}{bcde} (v2) at (-.5cm, 2cm) {};
    \anonlist{right}{abde} (v3) at (.5cm, 2cm) {};
    \anonlist{left}{adef} (v4) at (-.5cm, -1cm) {};
    \anonlist{right}{abcd} (v5) at (.5cm, -1cm) {};
    \anonlist{left}{abce} (v6) at (-.5cm, 1cm) {};
    \anonlist{right}{abcd} (v7) at (.5cm, 1cm) {};
    \anonlist{right}{abcd} (v8) at (0cm, 0cm) {};
    \draw (v0) -- (v4);
    \draw (v0) -- (v5);
    \draw (v1) -- (v4);
    \draw (v1) -- (v5);
    \draw (v2) -- (v6);
    \draw (v2) -- (v7);
    \draw (v3) -- (v6);
    \draw (v3) -- (v7);
    \draw (v4) -- (v8);
    \draw (v5) -- (v8);
    \draw (v6) -- (v8);
    \draw (v7) -- (v8);
  \end{tikzpicture}\\\hrule
  \begin{tikzpicture}
    \anonlist{above}{acfg} (x1) at (.5cm, 0cm) {};
    \anonlist{above}{abde} (x2) at (-.5cm, 0cm) {};
    \anonlist{below}{abce} (y1) at (-2cm, -1cm) {};
    \anonlist{below}{abfg} (y2) at (-1cm, -1cm) {};
    \anonlist{below}{abcd} (y3) at (0cm, -1cm) {};
    \anonlist{below}{adeg} (y4) at (1cm, -1cm) {};
    \anonlist{below}{adef} (y5) at (2cm, -1cm) {};
    \foreach \i in {1,2}
    {
      \foreach \j in {1,...,5}
      {
        \draw (x\i) -- (y\j);
      }
    }
  \end{tikzpicture}
  \begin{tikzpicture}
    \anonlist{above}{abce} (a) at (0cm, 0cm) {};
    \anonlist{above}{abef} (b) at (1cm, 0cm) {};
    \anonlist{above}{acde} (c) at (3cm, 0cm) {};
    \anonlist{below}{acdf} (d) at (0cm, -1cm) {};
    \anonlist{below}{abdf} (e) at (1cm, -1cm) {};
    \anonlist{below}{abcd} (f) at (3cm, -1cm) {};
    \anonlist{below}{abcf} (g) at (2cm, -.5cm) {};
    \draw (a) -- (b) -- (c) -- (f) -- (e) -- (d) -- (a);
    \draw (b) -- (e);
    \draw (b) -- (g) -- (f);
  \end{tikzpicture}
  \begin{tikzpicture}[scale=.7]
    \anonlist{right}{abdf} (y) at (0cm, 0cm) {};
    \foreach \i in {1,...,6}
    {
      \anonlist{above}{} (x\i) at (90 + \i*360/6 : 2cm) {};
    }
    \draw (y) -- (x1);
    \draw (y) -- (x3);
    \draw (y) -- (x5);
    \draw (x1) -- (x2) -- (x3) -- (x4) -- (x5) -- (x6) -- (x1);
    \node[left] at (x1) {abcd};
    \node[left] at (x2) {abce};
    \node[below] at (x3) {adef};
    \node[right] at (x4) {adeg};
    \node[right] at (x5) {abfg};
    \node[above] at (x6) {acfg};
  \end{tikzpicture}
  \label{fig:badgraphs}
  \caption{Some non-$\ab{4}{2}$-choosable graphs.}
\end{figure}
\begin{figure}
\ContinuedFloat
\centering
  \begin{tikzpicture}
    \foreach \i in {1,...,6}
    {
      \anonlist{above}{} (x\i) at (90 + \i*360/6 : 1cm) {};
    }
    \draw (x1) -- (x2) -- (x3) -- (x4) -- (x5) -- (x6) -- (x1);
    \node[left] at (x1) {acde};
    \node[left] at (x2) {abef};
    \node[below] at (x3) {abdf};
    \node[right] at (x4) {cdef};
    \node[right] at (x5) {bcef};
    \node[above] at (x6) {abde};
    \anonlist{left}{abcd} (y2) at (90 + 120 - 10 : 2cm) {};
    \anonlist{above}{abcf} (y6) at (90 + 360 : 2cm) {};
    \draw (x1) -- (y2) -- (x3);
    \draw (x5) -- (y6) -- (x1);
  \end{tikzpicture}
  \begin{tikzpicture}
    \anonlist{above}{abce} (a) at (0cm, 0cm) {};
    \anonlist{above}{abcd} (b) at (1cm, 0cm) {};
    \anonlist{left}{abde} (c) at (0cm, -1cm) {};
    \anonlist{above right}{abcd} (d) at (1cm, -1cm) {};
    \anonlist{right}{acde} (e) at (2cm, -1cm) {};
    \anonlist{below}{adef} (f) at (0cm, -2cm) {};
    \anonlist{below}{abcf} (g) at (1cm, -2cm) {};
    \anonlist{below}{abef} (h) at (2cm, -2cm) {};
    \draw (a) -- (b) -- (d) -- (e) -- (h) -- (g) -- (f) -- (c) -- (a);
    \draw (c) -- (d) -- (g);
  \end{tikzpicture}
  \begin{tikzpicture}
    \anonlist{above}{abde} (a) at (0cm, 0cm) {};
    \anonlist{above}{acef} (b) at (1cm, 0cm) {};
    \anonlist{above}{abcf} (c) at (2cm, 0cm) {};
    \anonlist{above}{abcd} (d) at (3cm, 0cm) {};
    \anonlist{below}{abcd} (e) at (0cm, -1cm) {};
    \anonlist{below}{abce} (f) at (1cm, -1cm) {};
    \anonlist{below}{abef} (g) at (2cm, -1cm) {};
    \anonlist{below}{adef} (h) at (3cm, -1cm) {};
    \draw (a) -- (b) -- (c) -- (d) -- (h) -- (g) -- (f) -- (e) -- (a);
    \draw (b) -- (f);
    \draw (c) -- (g);
  \end{tikzpicture}\\\hrule
  \begin{tikzpicture}
    \foreach \i in {1,...,6}
    {
      \anonlist{above}{} (x\i) at (60 + \i*360/6 : 1cm) {};
    }
    \draw (x1) -- (x2) -- (x3) -- (x4) -- (x5) -- (x6) -- (x1);
    \node[above] at (x1) {acde};
    \node[left] at (x2) {abce};
    \node[below] at (x3) {abcd};
    \node[below] at (x4) {acde};
    \node[right] at (x5) {abce};
    \node[above] at (x6) {abcd};
    \anonlist{left}{abde} (y2) at (60 + 120 : 2.2cm) {};
    \anonlist{right}{abde} (y5) at (60 + 5*60 : 2.2cm) {};
    \draw (x1) -- (y2) -- (x3);
    \draw (x4) -- (y5) -- (x6);
  \end{tikzpicture}
  \caption{Some non-$\ab{4}{2}$-choosable graphs.}
  \label{fig:badgraphs}
\end{figure}
\bibliographystyle{amsplain}\bibliography{chinabib}
\end{document}